\theoremstyle{plain}
\newtheorem{thm}{Theorem}[section]
\newtheorem{lem}[thm]{Lemma}
\newtheorem{cor}[thm]{Corollary}
\newtheorem{prop}[thm]{Proposition}
\newtheorem{problem}{Theorem}[section] 
\newtheorem{prob}[problem]{Problem}
\theoremstyle{definition}
\newtheorem{rem}[thm]{Remark}
\DeclareMathOperator{\Cent}{Cent}
\DeclareMathOperator{\I}{I}
\begin{document} 

\title[Characterizations of some groups in  terms of centralizers]{Characterizations of some groups in  terms of centralizers} 

\author[S. J. Baishya  ]{Sekhar Jyoti Baishya} 
\address{S. J. Baishya, Department of Mathematics, Pandit Deendayal Upadhyaya Adarsha Mahavidyalaya, Behali, Biswanath-784184, Assam, India.}

\email{sekharnehu@yahoo.com}

\begin{abstract}
A group $G$ is said to be $n$-centralizer if  its number of element centralizers $\mid \Cent(G)\mid=n$, an F-group if every non-central element centralizer contains no other element centralizer and a CA-group if  all non-central element centralizers are abelian. For any non-abelian $n$-centralizer group $G$, we  prove that $\mid \frac{G}{Z(G)}\mid \leq (n-2)^2$, if $n \leq  12$ and $\mid \frac{G}{Z(G)}\mid \leq 2(n-4)^{{log}_2^{(n-4)}}$  otherwise, which improves an earlier result.  We prove that if $G$ is an arbitrary non-abelian $n$-centralizer F-group, then gcd$(n-2, \mid \frac{G}{Z(G)}\mid) \neq 1$. For a finite F-group  $G$, we show that $\mid \Cent(G)\mid \geq \frac{\mid G \mid}{2}$ iff $G \cong A_4 $, an extraspecial $2$-group or a Frobenius group with abelian kernel and complement of order $2$.  Among other results, for a finite group $G$ with non-trivial center, it is proved that $\mid \Cent(G)\mid = \frac{\mid G \mid }{2}$ iff $G$ is an extraspecial $2$-group. We give a family of F-groups which are not CA-groups and  extend an earlier result.
\end{abstract}

\subjclass[2010]{20D60, 20D99}
\keywords{Finite group, Centralizer, Partition of a group}
\maketitle

\section{Introduction} \label{S:intro}

A group $G$ is said to be an F-group if for every $x, y \in G \setminus Z(G)$, $C(x) \leq C(y)$ implies that $C(x)=C(y)$, where $C(x)$ and $C(y)$ are centralizers of $x$ and $y$ respectively.
In 1953, Ito \cite {ito} introduced the notion of the class of finite F-groups and since then the influence of the element centralizers on the structure of groups has been studied extensively. An interesting subclass of finite F-groups is the class of I-groups, consisting of groups in which all centralizers of non-central elements are of same order. Ito in \cite {ito} proved that I-groups are nilpotent and direct product of an abelian group and a group of prime power order. Later on  in 2002, Ishikawa \cite{ish} proved that I-groups are of class at most $3$. In 1971,  Rebmann \cite {reb} investigated and classified F-groups.

In  1970,  Schmidt \cite {schmidt} introduced the notion of CA-groups (another important subclass of F-groups) consisting of groups in which all centralizers of non-central elements are abelian. He classified the finite CA-groups, whose structure is very much similar to that of finite F-groups. In 1973, the author  \cite{kos} studied finite groups in which the centralizer of every non-central element is a maximal subgroup. In 2009, the authors \cite{ctc095} introduced and characterised CH-groups, consisting of finite groups $G$ in which for every $x, y \in G \setminus Z(G)$, $xy=yx$ implies that $\mid C(x)\mid = \mid C(y)\mid$. Recently, in 2017 the author \cite{brough} studied finite groups in terms of central intersections of element centralizers. More recently, the present author in \cite{baishya2} used the influence of centralizers to characterise capable groups of order $p^2q$, $p \neq q$ are primes (recall that a group is said to be capable if it is the central factor of some group).

Starting with Belcastro and Sherman  \cite{ctc092} in 1994, the characterization of groups in terms of   the number of element centralizers (denoted by $\mid \Cent(G)\mid$) have been considered by many researchers (see for example \cite{zarrin094, en09, baishya2, baishyaF, con,  jaa4, zarrin0942} for finite groups and \cite{non,  zarrin09422} for infinite groups). In this paper,  for any $n$-centralizer non-abelian group $G$, we prove that if $n>12$, then   $\mid \frac{G}{Z(G)}\mid \leq 2(n-4)^{{log}_2^{(n-4)}}$ and if $n \leq 12$, then $\mid \frac{G}{Z(G)}\mid \leq (n-2)^2$. This  improves \cite[Theorem B]{jaa4}. Among other results, we prove that if $G$ is an  $n$-centralizer non-abelian F-group, then gcd$(n-2, \mid \frac{G}{Z(G)}\mid) \neq 1$. For a finite group $G$ with trivial center and largest prime divisor $q$, we prove that $\mid \Cent(G)\mid \geq q+2$, with equality  iff $G=C_q \rtimes C_n$ is a Frobenius group. For a finite F-group  $G$, we prove that $\mid \Cent(G)\mid \geq \frac{\mid G \mid}{2}$ iff $G \cong A_4 $, an extraspecial $2$-group or a Frobenius group with abelian kernel and complement of order $2$.  It is also proved that if $G$ is a finite group with non-trivial center, then $\mid \Cent(G)\mid \leq \frac{\mid G \mid}{2}$, with equality iff $G$ is an extraspecial $2$-group. Finally, we conclude the paper with a family of F-groups which are not CA-groups and  extend an earlier result.

Throughout this paper, $G$ is a  group with center $Z(G)$, commutator subgroup $G'$ and the set of element centralizers $\Cent(G)$. We write $Z(x)$ to denote the center of the centralizer $C(x)$ and $C_n$ to denote the cyclic group of order $n$.

\section{The main  results}

We begin with the following elementary lemma.

\begin{lem}\label{np1}
Let $G$ be a  group and $x, y \in G$. Then $C(x) \subseteq C(y)$ iff $Z(y) \subseteq Z(x)$.
\end{lem}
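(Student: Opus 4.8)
The plan is to unwind the definitions and exploit the single structural fact that every element lies in the center of its own centralizer. First I would record the observation that $x \in Z(x)$: since every element of $C(x)$ commutes with $x$ by the very definition of $C(x)$, the element $x$ commutes with all of $C(x)$, so $x$ belongs to $Z(x)$, the center of $C(x)$. Symmetrically, $y \in Z(y)$. I would also make explicit the working description $Z(x) = \{w \in C(x) : wg = gw \text{ for all } g \in C(x)\}$, which is the criterion I will use to certify membership in $Z(x)$ and in $Z(y)$.

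For the forward implication, assume $C(x) \subseteq C(y)$ and take an arbitrary $w \in Z(y)$. Then $w$ commutes with every element of $C(y)$, and hence with every element of the smaller set $C(x)$. In particular $w$ commutes with $x \in C(x)$, which forces $w \in C(x)$; combined with the fact that $w$ already centralizes all of $C(x)$, this yields $w \in Z(x)$. Since $w$ was arbitrary, $Z(y) \subseteq Z(x)$.

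For the converse, assume $Z(y) \subseteq Z(x)$. The crucial step is the chain $y \in Z(y) \subseteq Z(x)$, which places $y$ in the center of $C(x)$; consequently $y$ commutes with every $g \in C(x)$. Rereading the relation $gy = yg$ as the statement $g \in C(y)$ for each $g \in C(x)$ gives $C(x) \subseteq C(y)$, as desired.

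I expect no serious obstacle here, as the argument is purely a set-theoretic manipulation of commuting relations; the only point demanding care is to invoke the self-centrality facts $x \in Z(x)$ and $y \in Z(y)$ at exactly the right moment. In the forward direction, dropping this input would let one conclude only that $w$ centralizes $C(x)$, without securing the membership $w \in C(x)$ needed to land inside $Z(x)$; in the converse, it is precisely $y \in Z(y)$ that initiates the inclusion. This pivot on self-centrality is what allows both directions to close cleanly.
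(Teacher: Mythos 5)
Your argument is correct and is essentially the paper's own proof, just written out in more detail: the forward direction uses that an element of $Z(y)$ centralizes $C(y)\supseteq C(x)$ and in particular commutes with $x$, hence lies in $Z(x)$, and the converse pivots on $y\in Z(y)\subseteq Z(x)$ to get $C(x)\subseteq C(y)$. No discrepancies to report.
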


\begin{proof}
Suppose $C(x) \subseteq C(y)$. If $a \in Z(y)$, then $a \in C(x)$ and consequently, $a \in Z(x)$. Conversely, if $Z(y) \subseteq Z(x)$, then  $y \in Z(x)$ and so $C(x) \subseteq C(y)$.
\end{proof}

\begin{cor}\label{co1}
Let $G$ be a  group and $x, y \in G$. Then $y \in Z(x)$ iff $Z(y) \subseteq Z(x)$.
\end{cor}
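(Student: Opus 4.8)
The plan is to prove the two implications of the biconditional separately, invoking Lemma \ref{np1} only for one of them. The crucial preliminary observation, which I would record first, is that \emph{every element lies in the center of its own centralizer}. Indeed, recall that $Z(y)$ denotes $Z(C(y))$, the center of the centralizer $C(y)$. Since every $a \in C(y)$ commutes with $y$ by the very definition of the centralizer, the element $y$ commutes with all of $C(y)$; as also $y \in C(y)$, we conclude $y \in Z(C(y)) = Z(y)$. This self-membership fact is what makes the corollary essentially immediate from Lemma \ref{np1}.

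With this in hand, the implication $Z(y) \subseteq Z(x) \Rightarrow y \in Z(x)$ requires no further work: from $y \in Z(y)$ and the hypothesis $Z(y) \subseteq Z(x)$ we obtain $y \in Z(x)$ at once. For the converse, I would start from $y \in Z(x) = Z(C(x))$, which says precisely that $y$ commutes with every element of $C(x)$. Rephrasing, every element of $C(x)$ lies in $C(y)$, i.e.\ $C(x) \subseteq C(y)$. Feeding this containment of centralizers into Lemma \ref{np1} (read in the direction $C(x) \subseteq C(y) \Rightarrow Z(y) \subseteq Z(x)$) yields exactly $Z(y) \subseteq Z(x)$, completing the argument.

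I do not anticipate a genuine obstacle here; the corollary is effectively a repackaging of Lemma \ref{np1} via the observation that $y \in Z(y)$. The only points demanding care are keeping the definition $Z(x) = Z(C(x))$ straight and invoking Lemma \ref{np1} with the correct orientation of $x$ and $y$, since the lemma reverses the direction of inclusion between the centralizers $C(\cdot)$ and their centers $Z(\cdot)$.
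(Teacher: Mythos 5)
Your proof is correct and follows exactly the route the paper intends: the paper omits the proof as an immediate consequence of Lemma~\ref{np1}, and your argument---observing that $y \in Z(y)$ for one direction, and converting $y \in Z(x)$ into the containment $C(x) \subseteq C(y)$ before applying Lemma~\ref{np1} for the other---is precisely that derivation (indeed, the paper's own proof of the converse half of Lemma~\ref{np1} already uses the same self-membership fact implicitly).
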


\begin{cor}\label{npcor1}
Let $G$ be a  non-abelian group such that    $\mid C(x) \mid=p$ ($p$ a prime) for some $x \in G$. Then $C(x) \subsetneq C(y)$ if and only if  $y=1$.
\end{cor}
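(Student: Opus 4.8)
The plan is to exploit two facts about a centralizer of prime order: that it is cyclic with no proper non-trivial subgroups, and that the inclusion $C(x)\subseteq C(y)$ can be converted into a statement locating $y$. First I would note that $x\in C(x)$ always, so the hypothesis $C(x)\subsetneq C(y)$ gives $x\in C(y)$; that is, $x$ and $y$ commute, which is the same as $y\in C(x)$. Thus the strict inclusion immediately forces $y$ to live inside the small subgroup $C(x)$.

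Next I would bring in $\mid C(x)\mid=p$. Since $C(x)$ has prime order it is cyclic, and every non-identity element of a cyclic group of prime order is a generator. Hence there are only two possibilities for the element $y\in C(x)$: either $y$ is the identity, in which case $C(y)=C(e)=G$ and the conclusion holds at once; or $\langle y\rangle=C(x)$.

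The remaining work is to rule out the second possibility by contradicting strictness. If $\langle y\rangle=C(x)$, then in particular $x\in C(x)=\langle y\rangle$, so $x$ is a power of $y$. Because any element commuting with $y$ commutes with every power of $y$, this yields $C(y)\subseteq C(x)$. This directly contradicts the hypothesis $C(x)\subsetneq C(y)$. Therefore the case $\langle y\rangle=C(x)$ cannot occur, and we are left only with $y=e$, giving $C(y)=G$.

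I expect no real obstacle; the single point needing care is the final reversal of the inclusion, namely checking that $x\in\langle y\rangle$ forces $C(y)\subseteq C(x)$ and hence the contradiction. As an alternative I could route the whole argument through Lemma \ref{np1} and Corollary \ref{co1}, using that $Z(x)=C(x)$ since $C(x)$ is abelian of prime order, but the direct commuting argument above appears to be the shortest.
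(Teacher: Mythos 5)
Your proof is correct. It reaches the same conclusion by a slightly different, more elementary route: the paper's proof is a one-line application of Lemma \ref{np1} --- from $C(x)\subsetneq C(y)$ it deduces $Z(y)\subsetneq Z(x)$, and since $C(x)$ has prime order it is abelian, so $Z(x)=C(x)$ has order $p$ and $Z(y)$ must be trivial, forcing $y\in Z(G)$ and hence $C(y)=G$. You instead avoid the lemma entirely: the observation $x\in C(x)\subseteq C(y)$ gives $y\in C(x)$ (this is exactly the content of one direction of Lemma \ref{np1} in the abelian case, since $Z(y)\ni y$), and your dichotomy $y=e$ versus $\langle y\rangle=C(x)$, with the latter excluded because $x\in\langle y\rangle$ forces $C(y)\subseteq C(x)$ against strictness, substitutes for the other direction. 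Both arguments rest on the same two facts --- that $C(x)$ is abelian of prime order and that inclusion of centralizers reverses under the relevant duality --- so the difference is one of packaging: the paper's version is shorter given the lemma already proved, while yours is self-contained and makes the mechanism transparent. No gaps.
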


\begin{proof}
If $C(x) \subsetneq C(y)$, then by Lemma \ref{np1}, $Z(y) \subsetneq Z(x)$, forcing  $y =1$. Converse is trivial. 
\end{proof}

\begin{lem}\label{np155}
Let $G$ be a  finite group and $x \in G $. Then $\mid \frac{C(x)}{Z(G)}\mid \leq \mid C(xZ(G))\mid \leq \mid C(x) \mid$, where $C(xZ(G))$ is the centralizer of $xZ(G)$ in $\frac{G}{Z(G)}$.
\end{lem}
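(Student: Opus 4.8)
The plan is to route all three quantities through the canonical projection $\pi\colon G\to\frac{G}{Z(G)}$, writing $\bar{G}=\frac{G}{Z(G)}$ and $\bar{x}=xZ(G)$ (so that $C(\bar{x})$ is exactly the $C(xZ(G))$ of the statement). Since every central element commutes with $x$, we have $Z(G)\subseteq C(x)$; hence $\frac{C(x)}{Z(G)}$ is a genuine subgroup of $\bar{G}$ and the left-hand quantity is meaningful.

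For the first inequality I would simply observe that if $g\in C(x)$ then $\bar{g}$ commutes with $\bar{x}$, so the image $\pi(C(x))=\frac{C(x)}{Z(G)}$ is contained in $C(\bar{x})$. Comparing the orders of these two subgroups of $\bar{G}$ yields $\mid\frac{C(x)}{Z(G)}\mid\le\mid C(\bar{x})\mid$ at once.

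The substantive part is the second inequality. First I would set $H=\pi^{-1}(C(\bar{x}))$, a subgroup of $G$ containing $Z(G)$ with $\mid C(\bar{x})\mid=\mid H\mid/\mid Z(G)\mid$; concretely $H=\{g\in G:[x,g]\in Z(G)\}$. I would then define $f\colon H\to Z(G)$ by $f(g)=x^{-1}g^{-1}xg$, noting that $g\in H$ forces $g^{-1}xg\in xZ(G)$, so $f$ really does land in $Z(G)$. The key step is to check that $f$ is a homomorphism: expanding $f(gh)=x^{-1}h^{-1}g^{-1}xgh$, substituting $g^{-1}xg=xf(g)$, and using that $f(g)$ is central (so it may be moved past $h$) collapses the expression to $f(h)f(g)=f(g)f(h)$. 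Its kernel is precisely $\{g:gx=xg\}=C(x)$, so by the first isomorphism theorem $\frac{H}{C(x)}$ embeds into $Z(G)$. Hence $\mid H\mid\le\mid C(x)\mid\cdot\mid Z(G)\mid$, and dividing by $\mid Z(G)\mid$ gives $\mid C(\bar{x})\mid\le\mid C(x)\mid$.

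I expect the only genuine obstacle to be the verification that $f$ is a homomorphism: one must keep track of the order of the factors in $f(gh)$ and invoke the centrality of $f(g)$ at exactly the right moment. Everything else reduces to bookkeeping with $\pi$ and Lagrange's theorem.
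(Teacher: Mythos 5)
Your proposal is correct. The first inequality is handled exactly as in the paper: one observes that the image of $C(x)$ under the quotient map is contained in $C(xZ(G))$, so $\frac{C(x)}{Z(G)} \leq C(xZ(G))$ and the orders compare. The difference lies in the second inequality: the paper disposes of it by citing Fong's Lemma 1, whereas you supply a self-contained argument. Your argument is sound: with $H=\pi^{-1}(C(xZ(G)))=\lbrace g \in G : [x,g]\in Z(G)\rbrace$, the map $f(g)=x^{-1}g^{-1}xg$ does land in $Z(G)$, the substitution $g^{-1}xg=xf(g)$ together with the centrality of $f(g)$ gives $f(gh)=f(h)f(g)=f(g)f(h)$, the kernel is $C(x)$, and the first isomorphism theorem yields $\mid H\mid \leq \mid C(x)\mid \cdot \mid Z(G)\mid$, hence $\mid C(xZ(G))\mid \leq \mid C(x)\mid$. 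What your route buys is transparency and independence from the reference; it also isolates precisely where the hypothesis $N=Z(G)$ is used (to make $f$ a homomorphism). It is worth noting that even this is more than is needed: for an arbitrary normal subgroup $N$ the map $g \mapsto [x,g]$ from $\pi^{-1}(C_{G/N}(xN))$ to $N$ is no longer a homomorphism, but its fibers are still right cosets of $C(x)$, which already gives $[H:C(x)]\leq \mid N \mid$ and hence the inequality in the generality of Fong's lemma. So your proof is correct and complete, just slightly less general than the cited result.
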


\begin{proof}
The result follows from \cite[Lemma 1]{fong}, noting that $\frac{C(x)}{Z(G)} \leq C(xZ(G))$. 
\end{proof}

For a group $G$ and any $g, x \in G$, it is well known that $g^{-1}C(x)g=C(g^{-1}xg)$. One can easily verify the following analogues result.

\begin{lem}\label{z-class1}
Let $G$ be a  group and $x \in G$. Then $g^{-1}Z(x)g = Z(g^{-1}xg)$ for any $g \in G$.
\end{lem}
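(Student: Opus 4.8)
The plan is to exploit the fact recalled just before the statement, namely that conjugation carries $C(x)$ to $C(g^{-1}xg)$, and to lift this from centralizers to their centers. Since $Z(x)$ means $Z(C(x))$, the center of the proper centralizer, I first observe that conjugation by $g$, the map $\phi_g(a)=g^{-1}ag$, is an automorphism of $G$. An automorphism restricts to an isomorphism from any subgroup $H$ onto its image $\phi_g(H)$, and isomorphisms carry centers to centers, so $\phi_g(Z(H))=Z(\phi_g(H))$. Applying this with $H=C(x)$ and using $\phi_g(C(x))=g^{-1}C(x)g=C(g^{-1}xg)$ yields $g^{-1}Z(x)g=Z\bigl(C(g^{-1}xg)\bigr)=Z(g^{-1}xg)$, which is exactly the claim.

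If one prefers to stay entirely elementary, I would instead prove the two inclusions by a direct commutator computation. For the inclusion $\subseteq$, take $a\in Z(x)$, so $a\in C(x)$ and $a$ commutes with every element of $C(x)$. Then $g^{-1}ag\in g^{-1}C(x)g=C(g^{-1}xg)$, and for any $b\in C(g^{-1}xg)$ we may write $b=g^{-1}cg$ with $c\in C(x)$; since $ac=ca$, conjugating gives $(g^{-1}ag)b=g^{-1}(ac)g=g^{-1}(ca)g=b(g^{-1}ag)$. Hence $g^{-1}ag$ centralizes all of $C(g^{-1}xg)$, i.e. $g^{-1}ag\in Z(g^{-1}xg)$, so $g^{-1}Z(x)g\subseteq Z(g^{-1}xg)$.

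The reverse inclusion comes for free by symmetry: replacing $x$ by $g^{-1}xg$ and $g$ by $g^{-1}$ in the inclusion just established gives $gZ(g^{-1}xg)g^{-1}\subseteq Z(x)$, that is $Z(g^{-1}xg)\subseteq g^{-1}Z(x)g$. Combining the two inclusions yields the desired equality.

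There is no serious obstacle here; the only points needing care are bookkeeping. First, one should check that the notation $Z(g^{-1}xg)$ is even meaningful, since $Z(\cdot)$ is defined only for non-central elements: as $x\notin Z(G)$ and $\phi_g$ is an automorphism fixing $Z(G)$ setwise, we have $g^{-1}xg\notin Z(G)$, so $C(g^{-1}xg)$ is a proper centralizer and $Z(g^{-1}xg)$ is well defined. Second, one must keep the direction of conjugation consistent with the convention $g^{-1}C(x)g=C(g^{-1}xg)$ recorded in the excerpt, so that the symmetry substitution in the last step goes through cleanly.
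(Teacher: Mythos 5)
Your proof is correct. Your second, elementary argument is essentially the paper's own proof: the author verifies the inclusion $g^{-1}Z(x)g\subseteq Z(g^{-1}xg)$ by exactly the commutator computation you describe, and then proves the reverse inclusion by a second direct element-wise computation rather than by your symmetry substitution $x\mapsto g^{-1}xg$, $g\mapsto g^{-1}$ (which is a small economy on your part and is valid, since $g^{-1}xg$ is again non-central). Your first argument, via the observation that the inner automorphism $\phi_g$ restricts to an isomorphism $C(x)\to C(g^{-1}xg)$ and that isomorphisms carry centers to centers, is a genuinely cleaner conceptual route that the paper does not take; it buys brevity and makes the statement transparent as a special case of functoriality of the center, at the cost of invoking slightly more structure than the bare-hands verification. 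Your care in checking that $Z(g^{-1}xg)$ is well defined (i.e.\ that $g^{-1}xg\notin Z(G)$) is a point the paper leaves implicit.
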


The following key result gives a characterization of an $n$-centralizer F-group. 
Recall that a group $G$ is said to be $n$-centralizer if  its number of element centralizers $\mid \Cent(G)\mid=n$.  A cover for  a group $G$ is a collection of non-trivial subgroups whose union is $G$ itself. A collection $\Pi $ of non-trivial subgroups of a group $G$ is called a  partition if every non-trivial element of $G$ belongs to a unique subgroup in $\Pi$.  A partition $\Pi$ of a group $G$ is said to be non-trivial if $\mid \Pi \mid \neq 1$ and normal if $g^{-1}Xg \in \Pi$ for every $X \in \Pi$ and $g \in G$.

\begin{prop}\label{z-class5}
A non-abelian group $G$ is an $n$-centralizer F-group if and only if $\Pi= \lbrace \frac{Z(x_i)}{Z(G)} \mid 1 \leq i \leq n-1 \rbrace$ is a non-trivial normal partition of $\frac{G}{Z(G)}$, where $ \lbrace Z(x_i) \mid 1 \leq i \leq n-1 \rbrace$ is the set of all centers of the proper centralizers of $G$.
\end{prop}

\begin{proof}
Let $G$ be a non-abelian $n$-centralizer F-group. Suppose $x, y \in G \setminus Z(G)$ such that $C(x) \neq C(y)$. Let $z \in (Z(x) \cap Z(y)) \setminus Z(G)$. Then $C(x), C(y) \subseteq C(z)$ and consequently, $C(x)=C(z)=C(y)$, which is a contradiction. Hence $\Pi$ is a  non-trivial partition of $\frac{G}{Z(G)}$. In the present scenario, in view of Lemma \ref{z-class1}, 
we have $g^{-1}Z(G)\frac{Z(x_i)}{Z(G)}gZ(G)=\frac{g^{-1}Z(x_i)g}{Z(G)}=\frac{Z(g^{-1}x_ig)}{Z(G)}$ for every $\frac{Z(x_i)}{Z(G)} \in \Pi$ and any $gZ(G) \in \frac{G}{Z(G)}$. Hence $\Pi$ is a non-trivial normal partition.

Conversely, suppose $\Pi$ is a partition of $\frac{G}{Z(G)}$. Then $Z(x_i) \nsubseteq Z(x_j)$ for any $i, j \in  \lbrace 1, \dots, n-1 \rbrace$, $i \neq j$. Therefore using Lemma \ref{np1}, $G$ is an F-group.
\end{proof}

As an immediate application of the above result, we also have the following characterization of an arbitrary F-group.

\begin{prop}\label{1np}
If $G$ is a non-abelian $n$-centralizer F-group, then gcd $(n-2, \mid \frac{G}{Z(G)}\mid) \neq 1$.
\end{prop}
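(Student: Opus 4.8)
The plan is to convert the statement into a question about partitions using the characterization just proved, and then combine an elementary count with the structural classification of F-groups. Write $H=\frac{G}{Z(G)}$, $N=\mid H\mid$ and $m=n-1$, so that $n-2=m-1$. By Proposition \ref{z-class5} the collection $\Pi=\lbrace \frac{Z(x_i)}{Z(G)}\mid 1\le i\le m\rbrace$ is a normal partition of $H$ into $m$ subgroups; denote their orders by $a_1,\dots,a_m$, so each $a_i\mid N$ and $a_i>1$. Since every non-identity element of $H$ lies in exactly one component, counting non-identity elements yields $\sum_{i=1}^{m}(a_i-1)=N-1$, equivalently $\sum_{i=1}^{m}a_i=N+(m-1)$. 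The goal is then to exhibit a single prime $p$ dividing both $N$ and $m-1$.

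First I would record a clean congruence. Fix a prime $p\mid N$ and count elements of order $p$: by the elementary divisibility of the number of solutions of $x^p=1$, each component contributes $\equiv -1\pmod p$ when $p\mid a_i$ and $0$ otherwise, while the total over $H$ is also $\equiv -1\pmod p$; comparing, the number $k_p$ of components with $p\mid a_i$ satisfies $k_p\equiv 1\pmod p$. Consequently it would suffice to find a prime $p\mid N$ with $m\equiv 1\pmod p$, that is, with $l_p:=m-k_p\equiv 0\pmod p$, where $l_p$ counts the components of order prime to $p$. The hard part will be exactly this existence statement: the congruence $k_p\equiv1\pmod p$ together with $\sum(a_i-1)=N-1$ does not force it, since the component orders can be globally coprime (as they are for Frobenius groups, where $\gcd_i a_i=1$). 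Thus purely arithmetic counting is insufficient, and the group structure underlying the normal partition must be brought in.

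To overcome this I would invoke the structure theory of F-groups (Rebmann \cite{reb}), which confines $H=\frac{G}{Z(G)}$ to a short list. If $H$ is a $p$-group, then every $a_i$ is a positive power of $p$, so reducing $\sum_{i=1}^{m}(a_i-1)=N-1$ modulo $p$ gives $-m\equiv -1\pmod p$, whence $p\mid m-1=n-2$ and $p\mid N$. If $H$ is a Frobenius group with kernel $\overline{K}$ and complement $\overline{C}$, then $\Pi$ is the Frobenius partition, consisting of $\overline{K}$ together with its $\mid\overline{K}\mid$ conjugate complements, so $m=1+\mid\overline{K}\mid$ and $n-2=m-1=\mid\overline{K}\mid$, which divides $N=\mid\overline{K}\mid\cdot\mid\overline{C}\mid$; hence $\gcd(n-2,N)=\mid\overline{K}\mid\ge 2$. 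The remaining finitely many exceptional types (such as $H\cong S_4$) are then disposed of by direct computation from their explicit partitions. The delicate point in this last step, and the one I would write most carefully, is the identification of the abstract centralizer-partition $\Pi=\lbrace\frac{Z(x_i)}{Z(G)}\rbrace$ with the concrete Frobenius partition of $H$, which is precisely where Rebmann's description of F-groups of Frobenius type is required.
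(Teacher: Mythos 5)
Your first step coincides with the paper's: via Proposition \ref{z-class5} (together with Baer's theorem \cite[Theorem 1.2]{tom} to get finiteness of $\frac{G}{Z(G)}$), the problem reduces to the assertion that a finite group admitting a normal partition into $m$ components satisfies $\gcd(m-1,\mid G\mid)\neq 1$. At that point the paper simply cites \cite[Theorem 4.1]{mfdg}, which is exactly this statement about normal partitions; you instead attempt to reprove it by running through Rebmann's classification of F-groups \cite{reb}. Your preliminary counting lemma ($k_p\equiv 1\pmod p$, via Frobenius's theorem on the number of solutions of $x^p=1$) and your treatment of the $p$-group case are correct, and are indeed the kind of argument that underlies the cited partition theorem.

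However, the case analysis as written has two genuine gaps. First, in the Frobenius case you assert that $\Pi$ \emph{is} the Frobenius partition, i.e.\ that the kernel $\overline{K}$ of $\frac{G}{Z(G)}$ is a single component, so that $m-1=\mid\overline{K}\mid$. This is false in general: Rebmann's classification includes F-groups in which the preimage $K$ of the kernel is non-abelian with $Z(K)=Z(G)$, and then the elements of $K\setminus Z(G)$ produce several distinct centralizers, so $\overline{K}$ splits into $t>1$ components of $\Pi$ and $m=\mid\overline{K}\mid+t$. The conclusion can be rescued (when $\overline{K}$ is a $p$-group the $p$-group count gives $t\equiv 1\pmod p$, whence $p\mid m-1$ and $p$ divides $\mid\overline{K}\mid$), but that repair is not in your write-up, and you yourself flag the identification as the unproved ``delicate point.'' Second, the remaining types in Rebmann's list are not ``finitely many exceptional types'': they include the infinite families $\frac{G}{Z(G)}\cong PSL(2,p^f)$ and $PGL(2,p^f)$, for which the partition into Sylow $p$-subgroups and cyclic tori must actually be counted (one finds $m-1=q^2+q$ with $q=p^f$, which shares the factor $q$ with the group order); dismissing these as a finite check to be ``disposed of by direct computation'' leaves the proof incomplete. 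The paper avoids all of this by quoting the partition theorem \cite[Theorem 4.1]{mfdg} directly.
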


\begin{proof}
Let $C(x_1), C(x_2), \dots, C(x_{n-1})$ be the proper centralizers of $G$. In view of  \cite[Theorem 1.2 (Baer)]{tom}, $\frac{G}{Z(G)}$ is finite by noting that $G=Z(x_1)\cup \dots \cup Z(x_{n-1})$. Moreover, since $G$ is an F-group,  using Proposition \ref{z-class5}, $ \lbrace \frac{Z(x_i)}{Z(G)} \mid 1 \leq i \leq n-1 \rbrace$ is a non-trivial normal partition of $\frac{G}{Z(G)}$. Therefore by \cite[Theorem 4.1]{mfdg}, we have the result.
\end{proof}

\begin{cor}\label{np22}
Let $G$ be a non-abelian  $n$-centralizer nilpotent F-group. Then $p \mid (n-2)$, where $\mid \frac{G}{Z(G)}\mid=p^k$, $p$ a prime.
\end{cor}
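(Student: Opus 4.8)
The plan is to read the statement off directly from Proposition \ref{1np}. Since $G$ is in particular an $n$-centralizer F-group, that proposition gives gcd$(n-2, \mid \frac{G}{Z(G)}\mid) \neq 1$. Feeding in the hypothesis $\mid \frac{G}{Z(G)}\mid = p^k$ then closes the argument at once: the only positive divisors of $p^k$ are $1, p, p^2, \dots, p^k$, so any common divisor of $n-2$ and $p^k$ that is strictly larger than $1$ must be divisible by $p$. Hence $p \mid (n-2)$, as claimed.

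I do not expect a genuine obstacle here, since the corollary is nothing more than the prime-power case of Proposition \ref{1np}; the real content has already been established there (via the passage to the normal partition of $\frac{G}{Z(G)}$ in Proposition \ref{z-class5} and the partition-order result invoked in the proof of Proposition \ref{1np}). The one point worth recording is how nilpotency produces, or is compatible with, the prime-power hypothesis. For a nilpotent $G$ the group is the direct product of its Sylow subgroups, so $\frac{G}{Z(G)}$ splits as the direct product of the quotients $P_i/Z(P_i)$ taken over the Sylow subgroups $P_i$. The requirement $\mid \frac{G}{Z(G)}\mid = p^k$ therefore forces every Sylow subgroup other than the $p$-Sylow to be abelian, i.e. $G \cong A \times P$ with $A$ abelian and $P$ a non-abelian $p$-group, so that $\frac{G}{Z(G)} \cong P/Z(P)$ is genuinely a $p$-group.

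It is precisely this reduction to a single prime that makes the gcd in Proposition \ref{1np} collapse to a nontrivial power of $p$, and hence forces $p \mid (n-2)$. In short, the only work is to observe that a nontrivial divisor of a prime power $p^k$ is automatically a multiple of $p$; everything structural is inherited from the F-group partition machinery of the preceding propositions.
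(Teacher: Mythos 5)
Your core deduction is the same as the paper's final step: Proposition \ref{1np} gives $\gcd(n-2, \mid \frac{G}{Z(G)}\mid) \neq 1$, and a non-trivial divisor of $p^k$ is a multiple of $p$. Where you diverge is in the role of the condition $\mid \frac{G}{Z(G)}\mid = p^k$. You read it as a hypothesis and merely check that it is \emph{compatible} with nilpotency; the paper instead \emph{proves} it, arguing that since $\lbrace \frac{Z(x_i)}{Z(G)}\rbrace$ is a non-trivial normal partition of the finite nilpotent group $\frac{G}{Z(G)}$ (Proposition \ref{z-class5} plus Baer's theorem for finiteness), the classification of partitions of nilpotent groups (\cite[Theorem 1.3]{mfdg}) forces $\frac{G}{Z(G)}$ to be a $p$-group. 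So under the paper's reading the ``where'' clause is part of the conclusion, and your argument leaves a gap exactly there: your Sylow-decomposition remark runs the implication backwards (from the prime-power hypothesis to abelianness of the other Sylow subgroups) rather than showing the prime-power structure is automatic. The gap is easily closed without the citation: if $G$ is nilpotent with two non-abelian Sylow subgroups $P$ and $Q$, pick $a \in P \setminus Z(P)$ and $b \in Q \setminus Z(Q)$; then $C(ab) = C(a) \cap C(b) \subsetneq C(a)$ with both $ab$ and $a$ non-central, contradicting the F-group condition. Hence $G$ is abelian-by-(non-abelian $p$-group) as a direct product and $\frac{G}{Z(G)}$ really is a $p$-group. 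Adding that observation (or the paper's citation) makes your proof complete on either reading.
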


\begin{proof}
In view of  \cite[Theorem 1.2 (Baer)]{tom}, $\frac{G}{Z(G)}$ is finite by noting that $G=Z(x_1)\cup \dots \cup Z(x_{n-1})$. In the present scenario, by Proposition  \ref{z-class5} and \cite[Theorem 1.3]{mfdg}, we have $\frac{G}{Z(G)}$ is a $p$-group. Now, the result follows from Proposition \ref{1np}.
\end{proof}

Following Ito \cite{ito}, a finite group $G$ is said to be of conjugate type $(m, 1)$ if every proper centralizer of $G$ is of index $m$. He proved that a group of  conjugate type $(m, 1)$ is nilpotent and $m=p^k$ for some prime $p$. Moreover, he also proved that a group of  conjugate type $(p^k, 1)$ is a direct product of a $p$-group of the same type and an abelian group. The author in \cite{ish1} classified finite $p$-groups of conjugate type $(p, 1)$ and $(p^2, 1)$ up to isoclinism (for basic notions of isoclinism see \cite{pL95, non}). As an application to Proposition \ref{1np}, we also have the following result:

\begin{cor}\label{np2}
Let $G$ be a finite $n$-centralizer group of conjugate type $(m, 1)$.  Then $p \mid (n-2)$, where $m=p^k$, $p$ a prime.
\end{cor}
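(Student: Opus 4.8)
The plan is to reduce the statement to Corollary~\ref{np22} by verifying that a finite group of conjugate type $(m,1)$ is a nilpotent F-group whose central quotient is a $p$-group for the prime $p$ with $m=p^k$. Once those hypotheses are in place, the conclusion is immediate.

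First I would check that $G$ is an F-group. Since $G$ is of conjugate type $(m,1)$, every proper centralizer $C(x)$ has index $m$ in $G$, hence order $\frac{\mid G \mid}{m}$; in particular all proper centralizers share a common order, so $G$ is in fact an I-group. Now if $x,y \in G \setminus Z(G)$ satisfy $C(x) \leq C(y)$, then $\mid C(x) \mid = \mid C(y) \mid$ forces $C(x)=C(y)$, and hence $G$ is an F-group.

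Next I would pin down the arithmetic of the central quotient using Ito's theorems recalled just above the statement: a group of conjugate type $(m,1)$ is nilpotent with $m=p^k$, and moreover $G \cong P \times A$, where $P$ is a $p$-group of the same conjugate type and $A$ is abelian. Then $Z(G)=Z(P) \times A$, so $\frac{G}{Z(G)} \cong \frac{P}{Z(P)}$ is a $p$-group for the same prime $p$. Having shown that $G$ is a nilpotent F-group with $\mid \frac{G}{Z(G)} \mid$ a power of $p$, Corollary~\ref{np22} immediately yields $p \mid (n-2)$.

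The only delicate point is making sure the prime $p$ appearing in the conjugate type $(p^k,1)$ is the very prime that governs $\frac{G}{Z(G)}$; this is exactly what Ito's direct-product decomposition secures, since the coprime abelian factor $A$ is swallowed by the center and the central quotient is controlled entirely by $P$. With this matching of primes in hand, the result follows as a direct application of Corollary~\ref{np22}.
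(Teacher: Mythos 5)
Your proof is correct and takes essentially the same route as the paper's: the paper notes that $G$ is an F-group, cites Ito for $\frac{G}{Z(G)}$ being a $p$-group, and applies Proposition \ref{1np} directly, whereas you route through Corollary \ref{np22}, which is itself an immediate consequence of Proposition \ref{1np}. Your extra care in matching the prime of the conjugate type with the prime of the central quotient via Ito's direct-product decomposition only makes explicit a detail the paper leaves implicit.
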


\begin{proof}
In view of Ito \cite{ito}, $ \frac{G}{Z(G)}$ is a $p$-group. Now, the result follows from  Proposition \ref{1np} by noting that $G$ is an F-group.
\end{proof}

 The author in \cite[Theorem B]{jaa4} proved that every arbitrary $n$-centralizer group is center-by-(finite of order $\leq$ max $\lbrace (n-2)^2, 2(n-3)^{{log}_2^{(n-3)}}\rbrace$). We improve this result as follows: [note that $(n-4)^{{log}_2^{(n-4)}} < (n-3)^{{log}_2^{(n-3)}}$].

\begin{thm}\label{bc1}
Let $G$ be a non-abelian $n$-centralizer group. 

\begin{enumerate}
	\item If $G$ is an F-group, then  $ \mid\frac{G}{Z(G)}\mid  \leq (n -2)^2$. In particular, if $ \mid\frac{Z(x)}{Z(G)} \mid  < \surd \mid\frac{G}{Z(G)} \mid$ for all $x \in G \setminus Z(G)$, then $ \mid\frac{G}{Z(G)}\mid  < (n -2)^2$.
	\item If $G$ is not an F-group, then $\mid\frac{G}{Z(G)}\mid \leq max \lbrace(n-3)^2, 2(n-4)^{{log}_2^{(n-4)}}\rbrace$.
\end{enumerate} 
\end{thm}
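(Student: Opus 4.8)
The plan is to handle the two cases by entirely different mechanisms: Part (1) by a direct partition count, and Part (2) by reducing to the covering estimate already available in \cite{jaa4}. For Part (1), since $G$ is an $n$-centralizer F-group, Proposition \ref{z-class5} gives that $\Pi=\{\overline{Z}(x_i):=Z(x_i)/Z(G)\mid 1\le i\le n-1\}$ is a partition of $\overline{G}:=G/Z(G)$ into $n-1$ non-trivial subgroups, where $C(x_1),\dots,C(x_{n-1})$ are the proper centralizers of $G$. Write $m=|\overline{G}|$ and let $H=\overline{Z}(x_1)$ be a part of largest order $h$. The key step is the coset-counting estimate $h\le n-2$: distinct parts of a partition meet trivially, so for any $g\notin H$ each part $\overline{Z}(x_j)$ with $j\ne 1$ meets the coset $gH$ in at most one point (if $a,b\in\overline{Z}(x_j)\cap gH$ then $a^{-1}b\in\overline{Z}(x_j)\cap H=1$); since $gH$ consists of $h$ non-identity elements spread over the $n-2$ parts other than $H$, we get $h=|gH|\le n-2$.

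Summing the partition identity $m-1=\sum_{i=1}^{n-1}(|\overline{Z}(x_i)|-1)\le(n-1)(h-1)\le(n-1)(n-3)$ then yields $m\le(n-2)^2$. For the refinement, if $|\overline{Z}(x)|<\sqrt{m}$ for every $x\in G\setminus Z(G)$ then in particular $h<\sqrt{m}$; were $h=n-2$ this would force $m>(n-2)^2$, contradicting the bound just proved, so $h\le n-3$ and the same sum gives $m-1\le(n-1)(n-4)$, i.e.\ $m\le(n-1)(n-4)+1<(n-2)^2$.

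For Part (2) assume $G$ is not an F-group. By the definition of an F-group together with Lemma \ref{np1} there exist $x,y\in G\setminus Z(G)$ with $C(x)\subsetneq C(y)$, both proper centralizers. Because $C(x)\subseteq C(y)$, the set $C(x)$ is superfluous in the cover $G=\bigcup_{i=1}^{n-1}C(x_i)$, so $G$ is already the union of the $n-2$ proper centralizers $\{C(x_i)\mid i\ne x\}$. I would then feed this shortened cover into the covering estimate underlying \cite[Theorem B]{jaa4}, which for a cover by $k$ proper centralizers with intersection $Z(G)$ yields a bound of the form $\max\{(k-1)^2,2(k-2)^{\log_2(k-2)}\}$; passing from $k=n-1$ (the general case) to $k=n-2$ shifts the output to $\max\{(n-3)^2,2(n-4)^{\log_2(n-4)}\}$, which is the asserted bound. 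Note that the logarithmic term, which is exactly what the general estimate contributes, is the term absent in Part (1): the partition there replaces the general cover by one with pairwise-trivial intersections, for which the clean quadratic count suffices.

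The main obstacle is the bookkeeping of the intersection in Part (2). The full cover by all $n-1$ proper centralizers has intersection exactly $Z(G)$ (an element centralizing a representative of every centralizer class is central), but after discarding $C(x)$ one must confirm that the remaining $n-2$ centralizers still intersect in $Z(G)$ — equivalently, that no non-central element centralizes every $x_i$ with $i\ne x$. This is the point on which the estimate for $\overline{G}$, rather than for the index of some larger subgroup, depends, and it is precisely where the non-F hypothesis $C(x)\subsetneq C(y)$ must be exploited with care, since $z\in C(y)$ does not by itself return $z\in C(x)$. A secondary, easier point is to check that the estimate of \cite{jaa4} is monotone in the number of covering subgroups, so that reducing the count from $n-1$ to $n-2$ legitimately improves the bound; if one must instead first pass to an irredundant subcover, the count can only decrease further and the same conclusion follows.
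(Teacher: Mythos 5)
Your argument for part (1) is correct and is a genuine, self-contained alternative to what the paper does. The paper establishes the partition $\Pi=\lbrace Z(x_i)/Z(G)\rbrace$ via Proposition \ref{z-class5} and then simply cites \cite[Corollary 1]{par} for both the bound $(n-2)^2$ and the strict version; you instead reprove that corollary on the spot. Your two steps check out: for the largest component $H$ of order $h$ and any $g\notin H$, the $h$ non-identity elements of $gH$ are distributed injectively among the other $n-2$ components, giving $h\le n-2$; and the partition identity $m-1=\sum_i(\mid Z(x_i)/Z(G)\mid-1)\le (n-1)(n-3)$ gives $m\le (n-2)^2$, with the refinement following since $h<\surd m$ rules out $h=n-2$ and $h\le n-3$ forces $m\le (n-1)(n-4)+1<(n-2)^2$. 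What this buys is independence from the quoted reference; the underlying mechanism (counting against the partition) is the same.

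Part (2), however, has a genuine gap. The estimate you want to invoke --- \cite[Theorem 4.2]{tom}, which is what underlies \cite[Theorem B]{jaa4} --- is a bound of the form $\max\lbrace(k-1)^2,\,2(k-2)^{\log_2(k-2)}\rbrace$ for a group with an irredundant covering by $k$ maximal \emph{abelian} subgroups. It is not a statement about covers by proper centralizers, and no bound on $\mid G/Z(G)\mid$ in terms of the number of covering subgroups can hold for covers by arbitrary proper subgroups (any group with a $C_2\times C_2$ quotient is covered by three proper subgroups while $\mid G/Z(G)\mid$ is unbounded). So discarding the redundant $C(x)$ from the cover $G=\bigcup_i C(x_i)$ leaves you with $n-2$ possibly non-abelian subgroups to which the theorem does not apply. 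The paper's route avoids this: it covers $G$ by the \emph{abelian} subgroups $Z(x_1),\dots,Z(x_{n-1})$ (every non-central $g$ lies in $Z(g)=Z(x_i)$ for some $i$), uses the failure of the F-property plus Lemma \ref{np1} to convert $C(x_i)\subsetneq C(x_j)$ into $Z(x_j)\subsetneq Z(x_i)$, so that one abelian member is redundant and $G$ is covered by at most $n-2$ abelian subgroups, then passes to an irredundant cover by at most $n-2$ maximal abelian subgroups and applies \cite[Theorem 4.2]{tom}. This also dissolves what you call the main obstacle: the intersection of abelian subgroups covering $G$ centralizes every element of $G$ and hence lies in $Z(G)$ automatically, so no separate verification of the intersection is needed once the cover consists of abelian subgroups. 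Your monotonicity remark is fine, but the argument needs to be rebuilt on the abelian cover for the bound to be legitimate.
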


\begin{proof}
Let $C(x_1), C(x_2), \dots, C(x_{n-1})$ be the proper centralizers of $G$. In view of  \cite[Theorem 1.2 (Baer)]{tom}, $\frac{G}{Z(G)}$ is finite by noting that $G=Z(x_1)\cup \dots \cup Z(x_{n-1})$.\\ 

a) Now, if $G$ is an F-group,  using Proposition \ref{z-class5}, $\lbrace \frac{Z(x_i)}{Z(G)} \mid 1 \leq i \leq n-1 \rbrace$ is a partition of $\frac{G}{Z(G)}$. Therefore by
 \cite[Corollary 1]{par}, $ \mid\frac{G}{Z(G)}\mid  \leq (n -2)^2$. Second part follows from the proof of \cite[Corollary 1]{par}.\\

b) If $G$ is not an F-group, then in view of Lemma \ref{np1}, $Z(x_i) \subsetneq Z(x_j)$ for some $i, j \in  \lbrace 1, \dots, n-1 \rbrace$, $i \neq j$. Consequently, $G$ is covered by  $ \leq (n-2)$ abelian subgroups. In the present scenario, one can verify that $G$ has an irredundant covering by $ \leq (n-2)$ maximal abelian subgroups. Therefore by \cite[Theorem 4.2]{tom} we have the result.
\end{proof}

It is easy to see that every partition of a group is a cover but the converse is not true in general. According to the proof of  \cite[Theorem B]{jaa4}, for any $n$-centralizer non-abelian group $G$ the set $\lbrace \frac{Z(x_i)}{Z(G)} \mid 1 \leq i \leq n-1 \rbrace$ is a cover for $\frac{G}{Z(G)}$, but for such an F-group the same set becomes a partition for $\frac{G}{Z(G)}$. For more informations on related concepts see also \cite{smja}.

\begin{cor}\label{1sb}
Every arbitrary $n$-centralizer non-abelian group is 
 \[ 
 center-by-(finite \;\; of \;\; order \leq max \lbrace (n-2)^2, 2(n-4)^{{log}_2^{(n-4)}}\rbrace).
 \]
\end{cor}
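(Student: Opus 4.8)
The plan is to derive this directly from Theorem \ref{bc1} by splitting into the two cases according to whether or not $G$ is an F-group, since the phrase ``center-by-(finite of order $\leq M$)'' asserts precisely that $Z(G)$ is a central subgroup of $G$ with $G/Z(G)$ finite of order at most $M$. The finiteness of $G/Z(G)$ is already secured in the proof of Theorem \ref{bc1}: writing $C(x_1),\dots,C(x_{n-1})$ for the proper centralizers, one has $G=Z(x_1)\cup\dots\cup Z(x_{n-1})$, so Baer's theorem (\cite[Theorem 1.2]{tom}) forces $G/Z(G)$ to be finite. Thus only the numerical bound requires attention.

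First I would dispose of the case in which $G$ is an F-group. Here part (a) of Theorem \ref{bc1} gives immediately $\mid\frac{G}{Z(G)}\mid \leq (n-2)^2$, which is certainly at most $\max\{(n-2)^2,\, 2(n-4)^{{log}_2^{(n-4)}}\}$, so the required bound holds in this case.

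Next I would treat the case in which $G$ is not an F-group. Part (b) of Theorem \ref{bc1} yields $\mid\frac{G}{Z(G)}\mid \leq \max\{(n-3)^2,\, 2(n-4)^{{log}_2^{(n-4)}}\}$. The single point to observe is the elementary inequality $(n-3)^2 \leq (n-2)^2$, valid for every $n \geq 3$ (and in particular for all $n$ for which an $n$-centralizer group exists). This lets the first argument of the maximum be replaced by $(n-2)^2$, so that the non-F-group bound is also at most $\max\{(n-2)^2,\, 2(n-4)^{{log}_2^{(n-4)}}\}$.

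Combining the two cases gives the uniform estimate in both situations, which is exactly the assertion of the corollary. The entire substance is carried by Theorem \ref{bc1}; the only (essentially trivial) obstacle is reconciling the two distinct maxima appearing in parts (a) and (b), and this is resolved by the observation that $(n-3)^2 \leq (n-2)^2$, allowing the non-F-group bound to be absorbed into the stated one.
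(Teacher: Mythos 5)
Your proposal is correct and follows exactly the route the paper intends: the corollary is stated without a separate proof as an immediate consequence of Theorem \ref{bc1}, obtained by combining its two cases and absorbing the $(n-3)^2$ bound into $(n-2)^2$. Your write-up simply makes that implicit case analysis explicit, so there is nothing to add.
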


\begin{cor}\label{5sb}
Let $G$ be a finite $n$-centralizer group of conjugate type $(p, 1)$, $p$ a prime. Then  $n-2=p$ iff $\frac{G}{Z(G)} \cong C_p \times C_p$ (compare with \cite[Lemma 3.1]{ctc09}).
\end{cor}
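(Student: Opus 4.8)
The plan is to prove the single identity $|\Cent(G)|-1 = \frac{p^{k}-1}{p-1}$, where $p^{k}=\mid\frac{G}{Z(G)}\mid$, from which both implications of the corollary fall out at once. First I would reduce to the case that $G$ is a $p$-group. By Ito \cite{ito} a group of conjugate type $(p,1)$ is nilpotent and splits as $G=P\times A$ with $P$ a $p$-group of the same conjugate type and $A$ abelian; since $C_G((u,a))=C_P(u)\times A$, the assignment $C_P(u)\mapsto C_P(u)\times A$ is a bijection $\Cent(P)\to\Cent(G)$, and $\frac{G}{Z(G)}\cong\frac{P}{Z(P)}$. Thus both $\mid\Cent(\cdot)\mid$ and the central quotient are unchanged, so I may assume $G=P$ is a $p$-group, and write $V=\frac{G}{Z(G)}$, $\mid V\mid=p^{k}$.

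Next I would record the structural input. A $p$-group whose non-central conjugacy classes all have size $p$ has breadth one, hence $\mid G'\mid=p$, and since a commutator subgroup of prime order meets, and therefore lies in, the centre, $G$ has nilpotency class $2$; in particular $[x^{p},g]=[x,g]^{p}=1$ for all $x,g$, so $x^{p}\in Z(G)$ and $V$ is elementary abelian, an $\mathbb{F}_p$-space of dimension $k$. (This matches Ishikawa's classification \cite{ish1}: such $G$ are exactly the groups isoclinic to extraspecial $p$-groups.) The commutator then induces an alternating bilinear form $\beta\colon V\times V\to G'\cong\mathbb{F}_p$, $\beta(\bar x,\bar y)=[x,y]$, whose radical is $\{\bar x : x\in Z(G)\}=0$; hence $\beta$ is non-degenerate and $k=2m$ is even.

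With this in hand the count is immediate. For non-central $x$ one has $C(x)/Z(G)=\bar x^{\perp}$, a hyperplane, and the centre of $C(x)$ gives $Z(x)/Z(G)$ equal to the radical of $\beta$ restricted to $\bar x^{\perp}$, namely the line $\langle\bar x\rangle$. Since $G$ is an F-group (indeed an I-group, every proper centralizer having index $p$), Proposition \ref{z-class5} presents $\Pi=\lbrace Z(x_i)/Z(G)\rbrace$ as a normal partition of $V$, and by the previous sentence its members are precisely the one-dimensional subspaces of $V$, with $C(x)=C(y)$ iff $\langle\bar x\rangle=\langle\bar y\rangle$. Therefore the number of proper centralizers equals the number of lines of $V$, giving $\mid\Cent(G)\mid-1=\frac{p^{k}-1}{p-1}$. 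Finally $\mid\Cent(G)\mid-2=p$ is equivalent to $\frac{p^{k}-1}{p-1}=p+1$, i.e.\ to $p^{k}=p^{2}$, i.e.\ to $k=2$; and $k=2$ together with $V$ elementary abelian says exactly $\frac{G}{Z(G)}\cong C_p\times C_p$.

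I expect the main obstacle to be the structural step of the second paragraph, pinning down that $V$ is elementary abelian and that the commutator form is non-degenerate, since this is what converts the bare hypothesis ``conjugate type $(p,1)$'' into a statement of linear algebra. Once the proper centralizers are realized as the orthogonal complements $\bar x^{\perp}$ and the parts $Z(x_i)/Z(G)$ as the lines $\langle\bar x\rangle$, the remainder is bookkeeping with the partition furnished by Proposition \ref{z-class5}.
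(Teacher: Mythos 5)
Your proof is correct, but it takes a genuinely different route from the paper's. The paper handles the forward direction in one stroke: since $G$ is of conjugate type $(p,1)$ it is an F-group with $\frac{G}{Z(G)}$ a non-cyclic $p$-group (Ito), so Theorem \ref{bc1}(a) gives $\mid\frac{G}{Z(G)}\mid\leq(n-2)^2=p^2$, forcing $\frac{G}{Z(G)}\cong C_p\times C_p$; the converse is the same coset count you perform in the special case $k=2$ (each proper centralizer contributes $p-1$ non-central cosets of $Z(G)$ and these sets are disjoint, so there are $p+1$ proper centralizers). You instead bypass Theorem \ref{bc1} entirely and prove the sharper identity $\mid\Cent(G)\mid-1=\frac{p^{k}-1}{p-1}$ with $p^{k}=\mid\frac{G}{Z(G)}\mid$, by reducing to a $p$-group via Ito's direct-product decomposition, invoking Knoche/Ishikawa to get $\mid G'\mid=p$ and class $2$, and then reading off the proper centralizers as the hyperplanes $\bar x^{\perp}$ and the sets $\frac{Z(x)}{Z(G)}$ as the lines $\langle\bar x\rangle$ of the non-degenerate alternating commutator form on the elementary abelian quotient. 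All the steps check out (in particular $\frac{Z(C(x))}{Z(G)}=\bar x^{\perp}\cap(\bar x^{\perp})^{\perp}=\langle\bar x\rangle$ is right), and your argument buys strictly more than the corollary: an exact formula for $n$ in terms of $\mid\frac{G}{Z(G)}\mid$, from which Proposition \ref{np2b} also follows immediately. What it costs is the extra structural input (Knoche's theorem and the symplectic-form analysis), whereas the paper's version is a two-line corollary of machinery it has already built for arbitrary F-groups.
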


\begin{proof}
In view of Theorem \ref{bc1}, we have $\frac{G}{Z(G)} \cong C_p \times C_p$. Conversely, if $\frac{G}{Z(G)} \cong C_p \times C_p$, then $n=p+2$ by noting that in the present scenario $C(a) \cap C(b)=Z(G)$ for any $a, b \in G \setminus Z(G)$ with $ab \neq ba$ and each proper centralizer of $G$ contains exactly $p$ distinct right cosets of $Z(G)$.
\end{proof}

The following result improves \cite[Theorem 3.3]{en09}.

\begin{prop}\label{52sb}
Let $G$ be a finite $n$-centralizer group of conjugate type $(p^2, 1)$,  $p$ a prime. Then  $n-2=p^2$ iff $\frac{G}{Z(G)} \cong C_p \times C_p \times C_p \times C_p$. 
\end{prop}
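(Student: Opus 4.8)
The plan is to route both implications through the normal partition of Proposition~\ref{z-class5}, exactly as in the proof of Corollary~\ref{5sb}. Since every proper centralizer of $G$ has the same index $p^{2}$, the group $G$ is an I-group and hence an F-group; so by Ito \cite{ito} the quotient $\frac{G}{Z(G)}$ is a finite $p$-group and, by Proposition~\ref{z-class5}, $\Pi=\lbrace \frac{Z(x_i)}{Z(G)}\mid 1\le i\le n-1\rbrace$ is a (normal) partition of $\frac{G}{Z(G)}$ into $n-1$ parts. The arithmetic backbone is that for $x\in G\setminus Z(G)$ conjugate type $(p^{2},1)$ gives $\mid\frac{C(x)}{Z(G)}\mid=\frac{1}{p^{2}}\mid\frac{G}{Z(G)}\mid$, while $Z(x)\subseteq C(x)$ (via Lemma~\ref{np155}) forces each part to satisfy $\mid\frac{Z(x_i)}{Z(G)}\mid\le\frac{1}{p^{2}}\mid\frac{G}{Z(G)}\mid$. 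Everything below is extracted from these two facts together with the element count $\mid\frac{G}{Z(G)}\mid-1=\sum_{i=1}^{n-1}\bigl(\mid\frac{Z(x_i)}{Z(G)}\mid-1\bigr)$.

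For the converse, assume $\frac{G}{Z(G)}\cong C_p\times C_p\times C_p\times C_p$. Then $G$ is nilpotent of class $2$, so for fixed $x$ the assignment $gZ(G)\mapsto[g,x]$ is a well-defined homomorphism from $\frac{G}{Z(G)}$ to $G'$ with kernel $\frac{C(x)}{Z(G)}$, which conjugate type $(p^{2},1)$ makes a subgroup of order $p^{2}$. I would then show this kernel is abelian on representatives: writing $\frac{C(x)}{Z(G)}=\langle xZ(G),uZ(G)\rangle$ with $[x,u]=1$, the bilinearity and alternating property of the commutator in class $2$ force $[\,\cdot,\cdot\,]\equiv 1$ on $C(x)$, so $C(x)$ is abelian and $Z(x)=C(x)$. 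Hence every part of $\Pi$ has order $p^{2}$, and counting non-identity elements, $p^{4}-1=(n-1)(p^{2}-1)$, gives $n-1=p^{2}+1$, i.e. $n-2=p^{2}$.

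For the forward implication, suppose $n-2=p^{2}$, so $\Pi$ has $n-1=p^{2}+1$ parts. Because $G$ is an F-group, Theorem~\ref{bc1}(a) yields $\mid\frac{G}{Z(G)}\mid\le(n-2)^{2}=p^{4}$. Substituting the bound $\mid\frac{Z(x_i)}{Z(G)}\mid\le\frac{1}{p^{2}}\mid\frac{G}{Z(G)}\mid$ into the element count gives $\mid\frac{G}{Z(G)}\mid-1\le(p^{2}+1)\bigl(\frac{1}{p^{2}}\mid\frac{G}{Z(G)}\mid-1\bigr)$, which rearranges to $\mid\frac{G}{Z(G)}\mid\ge p^{4}$; hence $\mid\frac{G}{Z(G)}\mid=p^{4}$, and equality forces every part to have order exactly $p^{2}$, so every proper centralizer is abelian and $G$ is a CA-group. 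It remains to upgrade ``$\mid\frac{G}{Z(G)}\mid=p^{4}$'' to ``$\frac{G}{Z(G)}\cong C_p\times C_p\times C_p\times C_p$'', and I expect this to be the crux, since order $p^{4}$ alone permits many $p$-groups. My plan is to invoke Ishikawa's classification of $p$-groups of conjugate type $(p^{2},1)$ up to isoclinism \cite{ish1}: the central quotient $\frac{G}{Z(G)}$ is an isoclinism invariant, and the isoclinism type whose central quotient has order $p^{4}$ is precisely the elementary abelian one (the companion type, with $\mid\frac{G}{Z(G)}\mid=p^{3}$, forces parts of order $p$, whence $n-1=\frac{p^{3}-1}{p-1}=p^{2}+p+1$ and $n-2=p^{2}+p$, and is thereby excluded). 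Alternatively, one may argue directly that the normal congruence partition produced above can only live on an elementary abelian group; either route closes the proof.
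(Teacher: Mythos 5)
Your overall skeleton matches the paper's: both directions are routed through the F-group/partition machinery of Proposition~\ref{z-class5} together with the bound of Theorem~\ref{bc1}, and your converse is the same coset count the paper uses (each proper centralizer contributes $p^{2}$ cosets of $Z(G)$, pairwise meeting in $Z(G)$, whence $n=p^{2}+2$); your explicit class-$2$ verification that $C(x)$ is abelian is a welcome expansion of what the paper leaves implicit. In the forward direction you genuinely improve on the paper in one place: your counting inequality $\mid\frac{G}{Z(G)}\mid-1\le(p^{2}+1)\bigl(\tfrac{1}{p^{2}}\mid\frac{G}{Z(G)}\mid-1\bigr)$, combined with $\mid\frac{G}{Z(G)}\mid\le(n-2)^{2}=p^{4}$ from Theorem~\ref{bc1}, pins down $\mid\frac{G}{Z(G)}\mid=p^{4}$ and the CA-property by elementary means, where the paper instead cites \cite[Proposition 2.14]{baishya2} (and, in the parallel Proposition~\ref{np2a}, also \cite[Proposition 1]{mann}). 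That part of your argument is correct and self-contained.

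The gap is exactly where you predict it: upgrading $\mid\frac{G}{Z(G)}\mid=p^{4}$ to $\frac{G}{Z(G)}\cong C_p\times C_p\times C_p\times C_p$. The paper disposes of this by citing \cite[Proposition 3.9]{baishyaF}; you propose instead to read it off from Ishikawa's isoclinism classification \cite{ish1}, but you do not verify the statement you need from that classification, namely that every isoclinism type of conjugate type $(p^{2},1)$ whose central quotient has order $p^{4}$ has elementary abelian central quotient (your parenthetical only rules out the type with central quotient of order $p^{3}$, which is a different issue --- it would contradict $n-2=p^{2}$, not produce a non-elementary-abelian quotient of order $p^{4}$). Your fallback, that a normal partition of a group of order $p^{4}$ into $p^{2}+1$ subgroups of order $p^{2}$ forces the group to be elementary abelian, is also asserted rather than proved, and it is not a triviality: a priori such a spread could live on a non-abelian group of exponent $p$, and ruling this out requires an argument (e.g.\ via the Hughes subgroup or the commutator form) that you do not supply. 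So the decisive step of the forward implication remains open in your write-up; to close it you should either quote the precise statement of Ishikawa's theorem and check the list, or fall back on \cite[Proposition 3.9]{baishyaF} as the paper does.
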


\begin{proof}
In view of Theorem \ref{bc1},  \cite[Proposition 2.14]{baishya2} and  \cite[Proposition 3.9]{baishyaF}, we have $\frac{G}{Z(G)} \cong C_p \times C_p \times C_p \times C_p$.

Conversely, if $\mid \frac{G}{Z(G)} \mid =p^4$, then $n=p^2+2$ by noting that in the present scenario $C(a) \cap C(b)=Z(G)$ for any $a, b \in G \setminus Z(G)$ with $ab \neq ba$ and each proper centralizer of $G$ contains exactly $p^2$ distinct right cosets of $Z(G)$.
\end{proof}

We now have the following result which improves \cite[Theorem B]{jaa4}. It may be mentioned here that $\mid\frac{G}{Z(G)}\mid$ is well known for any $n (n \leq 10)$-centralizer group (see \cite[Theorem 3.5]{non} and use its arguments in the proof to main Theorems in \cite{baishya1, rostami}). 

\begin{thm}\label{sb1}
Let $G$ be any non-abelian $n$-centralizer group. If $n \leq  12$, then $\mid \frac{G}{Z(G)}\mid \leq (n-2)^2$  and $\mid \frac{G}{Z(G)}\mid \leq 2(n-4)^{{log}_2^{(n-4)}}$  otherwise.
\end{thm}

\begin{proof}
Suppose $n \leq  12$. If $G$ is not an F-group, then in view of  Lemma \ref{np1}, $G$ has an irredundant covering by $ \leq 10$ maximal abelian subgroups and consequently, the result follows using \cite[p. 857]{tom} (note that $G$ is a union of the centers of the proper centralizers of $G$). On the otherhand, if $G$ is  an an F-group, then the result follows using Theorem \ref{bc1}.

Second part follows from Corollary \ref{1sb} by noting that $(n-2)^2<2(n-4)^2< 2(n-4)^{{log}_2^{(n-4)}}$ for any $n>12$.
\end{proof}

As a consequence of Theorem \ref{bc1} we also have the next two results:

\begin{prop}\label{np2b}
Let $G$ be a finite $n$-centralizer group of conjugate type $(p, 1)$, $p$ a prime. Then $\mid\frac{G}{Z(G)}\mid  \leq (n -2)^2$, with equality iff $\frac{G}{Z(G)} \cong C_p \times C_p$.
\end{prop}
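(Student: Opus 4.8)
The plan is to obtain the bound from Theorem~\ref{bc1}(a) and to read off the equality case from its ``in particular'' clause. First I would note that a group of conjugate type $(p,1)$ is automatically an F-group: every proper centralizer has index $p$, hence the common order $\frac{\mid G\mid}{p}$, so $C(x)\subseteq C(y)$ with $x,y\in G\setminus Z(G)$ forces $C(x)=C(y)$ by comparing orders. Hence Theorem~\ref{bc1}(a) applies and gives $\mid\frac{G}{Z(G)}\mid\leq(n-2)^2$, the asserted inequality.

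The reverse implication of the equality statement is immediate: if $\frac{G}{Z(G)}\cong C_p\times C_p$, then Corollary~\ref{5sb} yields $n-2=p$, so $(n-2)^2=p^2=\mid\frac{G}{Z(G)}\mid$ and equality holds.

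For the forward direction I would first pin down the size of each part of the partition $\Pi$ of Proposition~\ref{z-class5}. By Ito~\cite{ito}, $\frac{G}{Z(G)}$ is a $p$-group; since $G$ has conjugate type $(p,1)$, every non-central class has size $p$, which forces $\mid G'\mid=p$. Thus $G$ is of class $2$ and $\frac{G}{Z(G)}$ is elementary abelian, carrying the nondegenerate alternating commutator pairing into $G'\cong C_p$. Relative to this pairing $\frac{C(x)}{Z(G)}$ is a hyperplane, and its center $\frac{Z(x)}{Z(G)}$ is the line $\langle xZ(G)\rangle$, so $\mid\frac{Z(x)}{Z(G)}\mid=p$ for every $x\in G\setminus Z(G)$. (Consistently, any non-central $a\in Z(x)$ has $C(x)\subseteq C(a)$ with both of order $\frac{\mid G\mid}{p}$, hence $C(a)=C(x)$, so these lines are precisely the parts of $\Pi$.) Establishing the value $\mid\frac{Z(x)}{Z(G)}\mid=p$ is the step I expect to be the main obstacle, as it rests on the class-$2$ structure of conjugate type $(p,1)$ groups rather than on the $n$-centralizer hypothesis alone.

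With $\mid\frac{Z(x)}{Z(G)}\mid=p$ in hand, assume $\mid\frac{G}{Z(G)}\mid=(n-2)^2$ and invoke the contrapositive of the ``in particular'' clause of Theorem~\ref{bc1}(a): if $\mid\frac{Z(x)}{Z(G)}\mid<\surd\mid\frac{G}{Z(G)}\mid$ held for all $x$, the inequality would be strict. Since equality is assumed, $p=\mid\frac{Z(x)}{Z(G)}\mid\geq\surd\mid\frac{G}{Z(G)}\mid$, so $\mid\frac{G}{Z(G)}\mid\leq p^2$. As $G$ is non-abelian, $\frac{G}{Z(G)}$ is a non-cyclic $p$-group and so has order at least $p^2$; hence $\mid\frac{G}{Z(G)}\mid=p^2$, and a non-cyclic group of order $p^2$ is $C_p\times C_p$. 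Alternatively, one could count centralizers directly: the elementary abelian structure gives $n-1=\frac{p^m-1}{p-1}$ when $\mid\frac{G}{Z(G)}\mid=p^m$, and solving $(n-2)^2=p^m$ forces $m=2$.
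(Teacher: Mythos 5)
Your proposal is correct and follows essentially the same route as the paper: the bound comes from Theorem~\ref{bc1}(a), the equality case is settled by showing $\mid\frac{Z(x)}{Z(G)}\mid=p$ and invoking the contrapositive of the ``in particular'' clause of Theorem~\ref{bc1}(a), and the converse is the coset count of Corollary~\ref{5sb}. The only real divergence is that the paper obtains $\mid\frac{Z(x)}{Z(G)}\mid=p$ by citing Mann's result on extreme elements of $p$-groups, whereas you derive it from scratch via $\mid G'\mid=p$ and the symplectic commutator pairing on $\frac{G}{Z(G)}$ --- a self-contained but equivalent justification of the same step.
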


\begin{proof}

Note that $G$ is an F-group and using Proposition \ref{z-class5}, $ \lbrace \frac{Z(x_i)}{Z(G)} \mid 1 \leq i \leq n-1 \rbrace$ is a partition of $\frac{G}{Z(G)}$. Therefore in view of Theorem \ref{bc1}, we have $\mid\frac{G}{Z(G)}\mid  \leq (n -2)^2$. 

Now, suppose $\mid\frac{G}{Z(G)}\mid  = (n -2)^2$. Note that using \cite[Proposition 1]{mann}, we have 
$\mid \frac{Z(x_i)}{Z(G)} \mid=p$ for any $ 1 \leq i \leq n-1$. Moreover, by Theorem \ref{bc1}, we have $\mid \frac{Z(x_j)}{Z(G)} \mid \geq \surd \mid\frac{G}{Z(G)} \mid$ for some $ 1 \leq j \leq n-1$. Consequently, we have $\frac{G}{Z(G)} \cong C_p \times C_p$. Conversely, if $\frac{G}{Z(G)} \cong C_p \times C_p$, then $n=p+2$ (i.e., $\mid\frac{G}{Z(G)}\mid  = (n -2)^2$) by noting that in the present scenario $C(a) \cap C(b)=Z(G)$ for any $a, b \in G \setminus Z(G)$ with $ab \neq ba$ and each proper centralizer of $G$ contains exactly $p$ distinct right cosets of $Z(G)$.
\end{proof}

\begin{prop}\label{np2a}
Let $G$ be a finite $n$-centralizer group of conjugate type $(p^2, 1)$, $p$ a prime. Then $\mid\frac{G}{Z(G)}\mid  \leq (n -2)^2$, with equality iff $\frac{G}{Z(G)} \cong C_p \times C_p \times C_p \times C_p$.
\end{prop}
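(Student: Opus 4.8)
The plan is to follow the template of Proposition~\ref{np2b} and reduce the equality case to Proposition~\ref{52sb}. First I would observe that a finite group of conjugate type $(p^2,1)$ is an I-group, since all proper centralizers share the common index $p^2$ and hence the common order $\frac{\mid G\mid}{p^2}$; being an I-group it is in particular an F-group, so Proposition~\ref{z-class5} applies and $\Pi=\lbrace \frac{Z(x_i)}{Z(G)}\mid 1\leq i\leq n-1\rbrace$ is a (normal) partition of $\frac{G}{Z(G)}$. Theorem~\ref{bc1}(a) then gives the bound $\mid\frac{G}{Z(G)}\mid\leq(n-2)^2$ at once. By Ito~\cite{ito}, $\frac{G}{Z(G)}$ is a $p$-group, say of order $p^m$; and since $\mid C(x_i)\mid=\frac{\mid G\mid}{p^2}$ we have $\mid\frac{C(x_i)}{Z(G)}\mid=p^{m-2}$, which together with $Z(G)\subsetneq C(x_i)$ forces $m\geq 3$.

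For the forward direction of the equivalence I would assume $\mid\frac{G}{Z(G)}\mid=(n-2)^2$. Then $p^m=(n-2)^2$ is a perfect square, so $m$ is even and $n-2=p^{m/2}$; combined with $m\geq 3$ this yields $m\geq 4$. Counting the non-identity elements distributed among the $n-1$ blocks of $\Pi$ gives $\sum_{i=1}^{n-1}(\mid\frac{Z(x_i)}{Z(G)}\mid-1)=p^m-1$, and since there are $n-1=p^{m/2}+1$ blocks the mean block size is exactly $p^{m/2}$; hence some block $\frac{Z(x_j)}{Z(G)}$ has order at least $p^{m/2}$ (this is also what the second assertion of Theorem~\ref{bc1}(a) provides). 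The decisive structural input, used exactly as in the proof of Proposition~\ref{52sb}, is that for a group of conjugate type $(p^2,1)$ one has $\mid\frac{Z(x)}{Z(G)}\mid\leq p^2$ for every non-central $x$ (this is the role of \cite[Proposition 2.14]{baishya2} and \cite[Proposition 3.9]{baishyaF}). Chaining the inequalities gives $p^{m/2}\leq\max_i\mid\frac{Z(x_i)}{Z(G)}\mid\leq p^2$, so $m\leq 4$ and therefore $m=4$, i.e. $n-2=p^2$. Proposition~\ref{52sb} now delivers $\frac{G}{Z(G)}\cong C_p\times C_p\times C_p\times C_p$.

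The converse I would read straight off Proposition~\ref{52sb}: if $\frac{G}{Z(G)}\cong C_p\times C_p\times C_p\times C_p$, then $n=p^2+2$, so $\mid\frac{G}{Z(G)}\mid=p^4=(n-2)^2$.

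The hard part is not the combinatorics of the partition, which is merely an average-versus-maximum comparison, but securing the bound $\mid\frac{Z(x)}{Z(G)}\mid\leq p^2$ for conjugate type $(p^2,1)$ groups. Once that is in hand the chain $p^{m/2}\leq p^2$ collapses $m$ to $4$ and the statement follows from Proposition~\ref{52sb}; without it the counting alone cannot exclude larger even values of $m$ (it would permit, for instance, a spread of an order-$p^6$ factor into $p^3+1$ blocks of size $p^3$), so the whole reduction hinges on that cited structural estimate.
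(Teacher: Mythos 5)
Your proposal is correct and takes essentially the same route as the paper: the bound comes from Proposition \ref{z-class5} together with Theorem \ref{bc1}, and the equality case is settled by playing the estimate $\mid \frac{Z(x_i)}{Z(G)}\mid \leq p^2$ (which the paper draws from \cite[Proposition 1]{mann}, not from the two results you name) against the existence of some $\frac{Z(x_j)}{Z(G)}$ of order at least $\surd \mid \frac{G}{Z(G)}\mid$ supplied by the second assertion of Theorem \ref{bc1}. The only cosmetic difference is that you pin down $\mid \frac{G}{Z(G)}\mid = p^4$ by a parity argument and then delegate to Proposition \ref{52sb}, whereas the paper reaches the same conclusion by citing \cite[Proposition 2.14]{baishya2} and \cite[Proposition 3.9]{baishyaF} directly.
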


\begin{proof}

Note that $G$ is an F-group and using Proposition \ref{z-class5}, $ \lbrace \frac{Z(x_i)}{Z(G)} \mid 1 \leq i \leq n-1 \rbrace$ is a partition of $\frac{G}{Z(G)}$. Therefore in view of Theorem \ref{bc1}, we have $\mid\frac{G}{Z(G)}\mid  \leq (n -2)^2$. 

Now, suppose $\mid\frac{G}{Z(G)}\mid  = (n -2)^2$. Note that using \cite[Proposition 1]{mann}, we have $\mid \frac{Z(x_i)}{Z(G)} \mid \leq p^2$ for any $1 \leq i \leq n-1$. Moreover, by Theorem \ref{bc1}, we have $\mid \frac{Z(x_j)}{Z(G)} \mid \geq \surd \mid\frac{G}{Z(G)} \mid$ for some $1 \leq j \leq n-1$. Consequently, we have $\mid\frac{G}{Z(G)}\mid  \leq p^4$. In the present scenario, using \cite[Proposition 2.14]{baishya2}, we have $\mid\frac{G}{Z(G)}\mid  = p^4$ by noting that $G$ is of conjugate type $(p^2, 1)$. Now, using \cite[Proposition 3.9]{baishyaF}, we have $\frac{G}{Z(G)} \cong C_p \times C_p \times C_p \times C_p$.

Conversely, suppose $\mid \frac{G}{Z(G)} \mid=p^4$. Then $n=p^2+2$ (i.e., $\mid\frac{G}{Z(G)}\mid  = (n -2)^2$) by noting that in the present scenario $C(a) \cap C(b)=Z(G)$ for any $a, b \in G \setminus Z(G)$ with $ab \neq ba$ and each proper centralizer of $G$ contains exactly $p^2$ distinct right cosets of $Z(G)$.
\end{proof}

A finite $p$-group ($p$ a prime) $G$ is said to be a special $p$-group of rank $k$ if $G'=Z(G)$ is elementary abelian of order $p^k$ and $\frac{G}{G'}$ is elementary abelian. Furthermore, a finite group $G$ is extraspecial if $G$ is a special $p$-group and $\mid G' \mid=\mid Z(G) \mid=p$. A finite $p$-group ($p$ a prime) $G$ is semi-extraspecial if for every maximal subgroup $N$ in $Z(G)$ the quotient $\frac{G}{N}$ is extraspecial. In view of \cite[Remark 3.13]{baishyaF}, we note that every semi-extraspecial group is a special group  and for such groups $\mid C(x) \mid$ is equal to the index $[G: G']$ for all $x \in G \setminus G'$. A group $G$ is said to be ultraspecial if $G$ is semi-extraspecial and $\mid G' \mid= \sqrt{\mid G: G' \mid}$. It is known that for each prime $p$ there are $p+3$ ultraspecial groups of order $p^6$ and all of the ultraspecial groups of order $p^6$ (including for $p=2$) are isoclinic (see \cite[Remark 3.13]{baishyaF}).  

In view of the above discussions, using Proposition \ref{1np} and Theorem \ref{bc1}, we have the following property of a semi-extraspecial $p$-group.

\begin{prop}\label{semi}
Let $G$ be a finite $n$-centralizer semi-extraspecial $p$-group, $p$ a prime. Then $p \mid (n-2)$ and $\mid\frac{G}{Z(G)}\mid  \leq (n -2)^2$.
\end{prop}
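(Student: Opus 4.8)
The plan is to reduce everything to the two tools cited in the remark preceding the statement, Proposition~\ref{1np} and Theorem~\ref{bc1}, the only real work being to verify that $G$ is an F-group. The key input is the fact recorded just above: for a semi-extraspecial $p$-group one has $Z(G)=G'$ (since $G$ is special) and $\mid C(x) \mid = [G:G']$ for every $x \in G \setminus G' = G \setminus Z(G)$. Thus all proper centralizers of $G$ share the common order $[G:G']$, so $G$ is an I-group; in particular, if $C(x) \subseteq C(y)$ for non-central $x,y$, comparing orders forces $C(x)=C(y)$, and hence $G$ is an F-group.

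Granting this, I would finish as follows. Since $G$ is an $n$-centralizer F-group, Theorem~\ref{bc1}(a) yields at once $\mid \frac{G}{Z(G)} \mid \leq (n-2)^2$, which is the second assertion. For the divisibility, Proposition~\ref{1np} gives $\gcd(n-2, \mid \frac{G}{Z(G)} \mid) \neq 1$. Because $\frac{G}{Z(G)} = \frac{G}{G'}$ is elementary abelian, its order is a power of $p$, so $p$ is the only prime that can divide $\mid \frac{G}{Z(G)} \mid$; the nontrivial common factor therefore forces $p \mid (n-2)$.

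I do not expect a genuine obstacle here: all the content lies in the observation that the equal-order property of the non-central centralizers of a semi-extraspecial group makes it an F-group (indeed an I-group), after which the inequality and the divisibility are simply read off from the two cited results. The only point deserving a word of care is that $\mid \frac{G}{Z(G)} \mid$ is a $p$-power, which is immediate from $Z(G)=G'$ together with $\frac{G}{G'}$ being elementary abelian for a special group.
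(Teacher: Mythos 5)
Your proposal is correct and follows the paper's intended argument exactly: the paper derives this proposition "in view of the above discussions" (i.e., the noted fact that $\mid C(x)\mid=[G:G']$ for all $x\in G\setminus G'=G\setminus Z(G)$, making $G$ an I-group and hence an F-group) together with Proposition \ref{1np} and Theorem \ref{bc1}, which is precisely your route. The only remark is that $\mid\frac{G}{Z(G)}\mid$ being a $p$-power is already immediate from $G$ being a finite $p$-group, so no appeal to $Z(G)=G'$ is needed for that step.
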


\begin{rem}\label{bbu}
For $n \leq 10$, Tomkinson in \cite[p. 857]{tom} showed that  if $G$ is covered by $n$ abelian (maximal abelian) subgroups, then  $ \mid\frac{G}{Z(G)}\mid  \leq (n -1)^2$. He noted that the bound $(n-1)^2$ is attained by the non-abelian groups of order $p^3$, $p$ a prime. Moreover, he mentioned that he did not have examples for other values of $n$. It may be mentioned here that any group $G$ with $\frac{G}{Z(G)} \cong C_p \times C_p$ can be covered by $p+1$ abelian subgroups. Moreover, in view of Proposition \ref{np2a}, we can see that if $G$ is a finite $(n+1)$-centralizer group of conjugate type $(p^2, 1)$, $p$ a prime and $\mid \frac{G}{Z(G)} \mid=p^4$, then $G$ is covered by $n$ abelian subgroups (note that in this case $G$ is a CA-group) and $ \mid\frac{G}{Z(G)}\mid = (n -1)^2$. For example, if $G$ is any ultraspecial group of order $p^6$, $p$ a prime, then $G$ is of conjugate type $(p^2, 1)$ and $\mid \frac{G}{Z(G)} \mid=p^4$. Therefore from the proof of Proposition \ref{np2a}, $G$ is covered by $n(=p^2+1)$  abelian subgroups (centralizers) and  $\mid\frac{G}{Z(G)}\mid  = (n -1)^2$.
\end{rem}

Recall that a group $G$ is semi-simple if $G$ has no non-trivial normal abelian subgroup. The author in \cite[Proposition 2.5]{zarrin0942} proved that if $G$ is a semi-simple $n$-centralizer group, then  $\mid G \mid \leq (n-1)!$. Using Corollary \ref{1sb}, we can improve this result as follows: [note that max $ \lbrace (n-2)^2, 2(n-4)^{{log}_2^{(n-4)}} \rbrace < (n-1)!$ (see \cite{jaa4})].

\begin{cor}\label{bbc}
If $G$ is an $n$-centralizer non-abelian group, then $\mid \frac{G}{Z(G)} \mid < (n-1)!$.  
\end{cor}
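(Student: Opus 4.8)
The plan is to obtain this bound as an immediate consequence of Corollary \ref{1sb} combined with the elementary numerical estimate recorded in the bracketed remark preceding the statement. First I would invoke Corollary \ref{1sb}, which asserts that every $n$-centralizer group is center-by-(finite of order $\leq \max \lbrace (n-2)^2, 2(n-4)^{{log}_2^{(n-4)}}\rbrace$); equivalently, $\mid \frac{G}{Z(G)}\mid \leq \max \lbrace (n-2)^2, 2(n-4)^{{log}_2^{(n-4)}}\rbrace$. It then suffices to prove that this maximum is strictly smaller than $(n-1)!$, for then the chain $\mid \frac{G}{Z(G)}\mid \leq \max \lbrace (n-2)^2, 2(n-4)^{{log}_2^{(n-4)}}\rbrace < (n-1)!$ yields the claim at once.

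The remaining task is therefore purely numerical, so the real work is just the growth comparison $\max \lbrace (n-2)^2, 2(n-4)^{{log}_2^{(n-4)}}\rbrace < (n-1)!$. Since $G$ is non-abelian throughout the paper we have $n \geq 4$, and one naturally separates two regimes according to which term realizes the maximum. For the polynomial term one verifies $(n-2)^2 < (n-1)!$, which already holds for every $n \geq 4$. For the quasi-polynomial term, rewriting $2(n-4)^{{log}_2^{(n-4)}}$ as $2 \cdot 2^{(\log_2(n-4))^2}$ exhibits it as a quasi-polynomially growing function of $n$, whereas $(n-1)!$ grows super-exponentially (by Stirling's estimate, roughly like $(n/e)^n$); hence the inequality holds for all $n$ in the admissible range. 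This is precisely the estimate attributed to \cite{jaa4}, which I would simply cite rather than reprove.

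I do not anticipate any structural obstacle: the entire content of the corollary has been transferred to the sharper order bound of Corollary \ref{1sb}, and beyond that only the growth comparison needs checking. If one wished to avoid citing \cite{jaa4} and argue the inequality directly, the only mild subtlety would be handling the small values of $n$ by hand (where the factorial has not yet outgrown the right-hand side), after which the monotonicity of the gap between $(n-1)!$ and the quasi-polynomial bound takes over; but this is entirely routine and does not affect the essence of the argument.
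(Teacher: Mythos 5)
Your proposal is correct and matches the paper's own argument exactly: the corollary is deduced from Corollary \ref{1sb} together with the numerical inequality $\max\lbrace (n-2)^2,\, 2(n-4)^{{\log}_2^{(n-4)}}\rbrace < (n-1)!$, which the paper likewise attributes to \cite{jaa4} in the remark immediately preceding the statement.
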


We now give the following result concerning  bounds of $\mid \Cent(G)\mid$ for a finite centerless group. For the bounds of $\mid \Cent(G)\mid$ of an arbitrary finite group, see \cite[Theorem 3.3]{baishyaF}. In the following result  $\alpha (I(G))$ denotes the number of centralizers of $G$ produced by the elements of $I(G)$, where $I(G)=\lbrace x \in G \mid x=x^{-1}\rbrace$.

\begin{prop}\label{np12}
Let $G$ be a finite centerless group and $q$ be the largest prime divisor of its order. Then 
\begin{enumerate}
	\item $\mid \Cent(G)\mid \geq  q+2$, with equality iff $G=C_q \rtimes C_n$ is a Frobenius group. 
	\item $\mid \Cent(G)\mid \leq \mid G \mid-1$, with equality  iff $G \cong S_3$.
\end{enumerate} 

 \end{prop}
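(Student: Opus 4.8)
The plan is to treat the two inequalities separately, disposing of the upper bound in (2) by an involution-pairing argument and reserving the real effort for the lower bound in (1). For part (2) I would exploit that $C(x)=C(x^{-1})$ for every $x\in G$, since an element commutes with $x$ exactly when it commutes with $x^{-1}$. Thus the surjection $x\mapsto C(x)$ from $G$ onto $\Cent(G)$ identifies each element with its inverse, so it fails to be injective as soon as $G$ has an element of order $>2$; as $G$ is centerless it is non-abelian and hence cannot consist solely of involutions, giving $\mid\Cent(G)\mid\le \mid G\mid-1$ at once. For the equality case I would count elements by order: writing $I(G)=\lbrace x:x=x^{-1}\rbrace$, the map collapses at least one inverse-pair, and $\mid\Cent(G)\mid=\mid G\mid-1$ forces it to collapse exactly one pair and to be injective elsewhere. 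This means $G$ has exactly two elements of order $>2$; these cannot have order $4$ (for then the unique $\langle x\rangle\cong C_4$ would be normal, $G/C(x)\hookrightarrow\Aut(C_4)\cong C_2$ would act on $\langle x\rangle$ by inversion, and $x^2$ would be fixed by all of $G$, forcing $x^2\in Z(G)=1$, a contradiction), nor any higher order (which would produce at least four elements of order $>2$). Hence the two elements have order $3$ and generate a unique, normal $C_3$, with all other non-identity elements involutions of pairwise distinct centralizers. Then $G/C_G(\langle x\rangle)\hookrightarrow\Aut(C_3)\cong C_2$ together with $C_G(\langle x\rangle)=C(x)=\langle x\rangle$ forces $\mid G\mid=6$, so $G\cong S_3$.

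For part (1) I would first use Cauchy to fix an element $x$ of order $q$ (note $q\ge 3$, since a $2$-group has non-trivial center); as $Z(G)=1$, every non-identity element yields a proper centralizer while the identity yields $G$ itself, so it suffices to exhibit $q+1$ distinct proper centralizers. The mechanism is a Frobenius-type dichotomy governed by a Sylow $q$-subgroup $P$. If $P$ is normal, then $C_G(P)\trianglelefteq G$ and $G/C_G(P)$ embeds faithfully in $\Aut(P)$; in the extreme case $C_G(P)=\langle x\rangle\cong C_q$ the faithful action of the resulting cyclic quotient on $C_q$ is automatically fixed-point-free, exhibiting $G=C_q\rtimes C_n$ as a Frobenius group whose centralizers are precisely $G$, the kernel $C_q$, and the $q$ conjugates of the complement, i.e. exactly $q+2$; any enlargement of $C_G(P)$ beyond $\langle x\rangle$ produces further centralizers from $C_G(P)\setminus\langle x\rangle$ and hence strict inequality. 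If $P$ is not normal, then $n_q\equiv 1\pmod q$ gives $n_q\ge q+1$, and I would count the distinct centralizers arising from the conjugates of $\langle x\rangle$ together with the elements acting on them. Reversing the normal-$P$ analysis then yields the equality characterization: $\mid\Cent(G)\mid=q+2$ forces $C_G(P)=\langle x\rangle=C_q$ normal with $G/\langle x\rangle$ cyclic acting fixed-point-freely, i.e. the Frobenius group $C_q\rtimes C_n$, whose centralizer count was computed above.

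The hard part will be the lower bound in (1). Producing $q+1$ distinct proper centralizers uniformly is delicate precisely because the Sylow-$q$ centralizers can coincide — as already happens in $C_q^{2}\rtimes C_{q+1}$, where $Z(C_G(P))$ contains an elementary abelian $q$-subgroup and all its order-$q$ elements share one centralizer — so one cannot rely on them alone. I would isolate, as a separate lemma, the fact that such a coincidence $C_G(\langle x\rangle)^{g}=C_G(\langle x\rangle)$ forces a large central $q$-subgroup whose fixed-point-free complement floods $G$ with complement-type conjugates, keeping the total above $q+2$; controlling this interplay and then squeezing it to force the Frobenius structure in the equality case is where the genuine work lies. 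By contrast, the upper bound in (2) and its equality case are essentially bookkeeping once the identity $C(x)=C(x^{-1})$ and the centerless hypothesis are in hand.
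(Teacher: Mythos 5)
Your part (2) is essentially the paper's argument and is fine: the bound from $C(x)=C(x^{-1})$, the count showing equality forces exactly one collapsing inverse-pair, and the identification of $G$ with $S_3$ all match (your extra care ruling out order $4$ is a welcome detail the paper glosses over). The problem is part (1), where you have not actually proved the lower bound $\mid \Cent(G)\mid \geq q+2$: your Sylow dichotomy handles the normal-$P$ case only in the extreme sub-case $C_G(P)=\langle x\rangle$, leaves the non-normal case as an unexecuted "I would count...", and you yourself flag the coincidence of Sylow-centralizers as "where the genuine work lies." That is precisely the step that must be supplied, so as written the proposal has a genuine gap. The paper avoids the whole Sylow analysis with a short direct construction: pick $x$ of order $q$ and $a\notin C(x)$, and check that $A=\lbrace x,a,ax,\dots,ax^{q-1}\rbrace$ is a set of $q+1$ pairwise non-commuting elements (if $ax^i$ and $ax^j$ commuted then $a\in C(x^{j-i})=C(x)$ since $\gcd(j-i,q)=1$); pairwise non-commuting elements have pairwise distinct proper centralizers, and adding $G$ itself gives $q+2$. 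You should adopt some such uniform construction rather than the case analysis.

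The equality case of (1) is also only asserted in your sketch ("reversing the normal-$P$ analysis then yields..."), and it is not routine: one must first show that when $\mid\Cent(G)\mid=q+2$ every proper centralizer is abelian (in the paper: the proper centralizers are exactly those of the elements of $A$, and a non-abelian one would force two elements of $A$ to commute), then invoke the structure of centerless CA-groups — ruling out $PSL(2,2^m)$ by its known centralizer count and landing on a Frobenius group with cyclic complement — and finally use the fact that the number of Frobenius complements equals the kernel order to conclude the kernel is $C_q$. None of this is visible in your outline, and the Frobenius structure does not simply fall out of a normal Sylow $q$-subgroup with $C_G(P)=\langle x\rangle$ without an argument that the complement acts fixed-point-freely on all of $G/\langle x\rangle$, not just on $\langle x\rangle$. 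Your converse computation of $\mid\Cent(C_q\rtimes C_n)\mid=q+2$ is correct and agrees with the paper.
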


\begin{proof}

a) Suppose $x \in G$ is an element of order $q$ and let $a \in G \setminus C(x)$. Clearly, $ax^i \in G \setminus C(x)$ for any $i$. Consider the set $A=\lbrace x, a, ax, ax^2, \dots, ax^{q-1} \rbrace$. Observe that if $ax^iax^j=ax^jax^i$ for some $0 \leq i < j \leq q-1$, then $a \in C(x^{j-i})=C(x)$, a contradiction (noting that $gcd((j-i), o(x))=1$). Therefore $A$ is a set of pairwise non-commuting elements of $G$ and $\mid A \mid=q+1$. Hence $\mid \Cent(G)\mid \geq  q+2$. 

Now, suppose $\mid \Cent(G)\mid = q+2$. Then the proper centralizers of $G$ are precisely the centralizers given by the elements of $A$. Let $C(u)$ be non-abelian for some $u \in A$. Let $v \in C(u) \setminus Z(u)$. Then $C(v)=C(w)$ for some $w \in A$ and consequently, $u \in C(v)=C(w)$, which is a contradiction. Therefore $G$ is a CA-group. Now, if $G$ is non-solvable, then by \cite[Lemma 3.9]{abc},  $G \cong PSL(2, 2^m), 2^m>3$. But then using \cite[Theorem 1.1]{zarrin094}, $\mid \Cent(G)\mid \neq  q+2$. Therefore $G$ is solvable and consequently, by\cite[Proposition 3.1.1, Proposition 1.2.4]{elizabeth}, $G$ is a Frobenius group with cyclic complement. In the present scenario, the number of Frobenius complements is $q$ and consequently, by \cite{frob}, the Frobenius kernel is $C_q$. Hence the result follows. Conversely, if  $G=C_q \rtimes C_n$ is a Frobenius group, then $G$ has trivial center. Moreover, by the property of Frobenius groups, $q$ is the largest prime divisor of $\mid G \mid$ and  $\mid \Cent(G)\mid =  q+2$. \\

b) Clearly, $G$ must have an element $x$ such that $x \neq x^{-1}$. Therefore $\mid \Cent(G)\mid \leq \mid G \mid-1$ by noting that $C(x)=C(x^{-1})$.

Now, if $\mid G \mid$ is odd, then $o(y)=p>3$ for some $y \in G \setminus Z(G)$ and some prime $p$. But then 
$\mid \Cent(G)\mid < \frac{\mid G \mid}{2}+1<\mid G \mid-1$, a contradiction. Hence $\mid G \mid$ is even. In the present scenario, it is easy to see that $\mid G \setminus \I(G) \mid=2k$ for some $k$ and consequently, we have $\mid \I(G)\mid= \mid G \mid-2k$. Since $C(a)=C(a^{-1})$ for any $a \in G$, therefore we have  
\[
\mid \Cent(G)\mid \leq \frac{\mid G \setminus \I(G)\mid}{2}+ \mid \I(G)\mid=\mid G \mid -k \leq \mid G \mid-1.
\]
It now follows that $k=1$ and hence $\mid \I(G) \mid=\mid G \mid-2=\alpha(\I(G))$. In the present scenario, it is easy to verify that $C(b)=\langle b \rangle$ for any $b \in G \setminus Z(G)$. Hence $G \cong S_3$. Conversely, we have $\mid \Cent(S_3)\mid =\mid S_3 \mid-1$. 
\end{proof}

 Let  $q$ be the largest prime divisor of the order of a group $G$. The authors in \cite[Theorem 6]{ctc092} proved that $\frac{\mid \Cent(G)\mid}{\mid G \mid} \leq \frac{1}{2}$, if $q=2$ and $\frac{\mid \Cent(G)\mid}{\mid G \mid} \leq \frac{3}{4}+ \frac{1}{4q}$ otherwise. Accordingly, they raised the question whether or not there exists a finite group other than $Q_8$ and $D_{2p}$ ($p$ a prime) such that $\frac{\mid \Cent(G)\mid}{\mid G \mid} \geq \frac{1}{2}$. In \cite{en09}, the author answered negatively that question by showing that $\mid \Cent(D_{2n}) \mid=n+2$, if $n$ is odd ($D_{2n}$ denotes the dihedral group of order $2n$). In this connection, we prove the following result which gives a new property of extraspecial $2$-groups. This  also gives yet another two families of counterexamples  to the above question.
 
Recall that a finite group $G$ is extraspecial if $G$ is a special $p$-group and $\mid G' \mid=\mid Z(G) \mid=p$. It is well known that every extraspecial $p$-group has order $p^{2a+1}$ for some positive integer $a$. Moreover, for each prime $p$ and for every positive integer $a$, there exists, upto isomorphism, exactly two extraspecial groups of order $p^{2a+1}$.

\begin{prop}\label{t1}
If $G$ is a finite group with non-trivial center, then $\mid \Cent(G)\mid \leq \frac{\mid G \mid}{2}$, with equality iff $G$ is an extraspecial $2$-group.
\end{prop}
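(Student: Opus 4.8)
The plan is to exploit two elementary relations among centralizers: $C(x)=C(xz)$ for every $z\in Z(G)$, and $C(x)=C(x^{-1})$. The first shows that the centralizer of an element depends only on its coset modulo $Z(G)$, so the assignment $xZ(G)\mapsto C(x)$ is a well-defined surjection $\frac{G}{Z(G)}\twoheadrightarrow \Cent(G)$. This immediately yields $\mid\Cent(G)\mid\le\mid\frac{G}{Z(G)}\mid=\frac{\mid G\mid}{\mid Z(G)\mid}\le\frac{\mid G\mid}{2}$, where the last inequality holds because $Z(G)$ is non-trivial, and this establishes the bound.

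For the equality direction I would first observe that $\mid\Cent(G)\mid=\frac{\mid G\mid}{2}$ forces both inequalities above to be equalities, hence $\mid Z(G)\mid=2$ and the surjection $xZ(G)\mapsto C(x)$ is in fact a bijection. Writing $Z(G)=\{1,z\}$ and feeding $x$ and $x^{-1}$ into this injective map (using $C(x)=C(x^{-1})$), we obtain $x^{-1}\in xZ(G)=\{x,xz\}$, whence $x^{2}\in\{1,z\}\subseteq Z(G)$ for every $x\in G$. From this single relation I would draw two conclusions: every element satisfies $x^{4}=1$, so $G$ is a $2$-group; and $\frac{G}{Z(G)}$ has exponent $2$, hence is elementary abelian, so $G'\le Z(G)$. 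Since $G$ is non-abelian we have $G'\neq 1$, and $\mid Z(G)\mid=2$ then forces $G'=Z(G)$. Thus $G'=Z(G)$ has order $2$, while $G^{2}\le Z(G)$ makes $\frac{G}{G'}$ elementary abelian, so $G$ is by definition an extraspecial $2$-group.

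For the converse I would verify directly that an extraspecial $2$-group realizes equality. Here $\mid Z(G)\mid=2$ gives $\mid\frac{G}{Z(G)}\mid=\frac{\mid G\mid}{2}$, so it only remains to show the surjection $xZ(G)\mapsto C(x)$ is injective. Identifying $V=\frac{G}{Z(G)}$ with a vector space over $\mathbb{F}_{2}$ equipped with the non-degenerate alternating form induced by commutation, one has $\frac{C(x)}{Z(G)}=(xZ(G))^{\perp}$; non-degeneracy then gives that $(xZ(G))^{\perp}=(yZ(G))^{\perp}$ forces $\langle xZ(G)\rangle=\langle yZ(G)\rangle$, which over $\mathbb{F}_{2}$ means $xZ(G)=yZ(G)$. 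Hence distinct cosets yield distinct centralizers, the map is bijective, and $\mid\Cent(G)\mid=\mid\frac{G}{Z(G)}\mid=\frac{\mid G\mid}{2}$.

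The main obstacle I anticipate is the equality direction, namely converting the purely numerical coincidence $\mid\Cent(G)\mid=\frac{\mid G\mid}{2}$ into a rigid structural conclusion. All the leverage comes from combining the forced bijectivity with $C(x)=C(x^{-1})$ to pin down $x^{2}\in Z(G)$ for every $x$; once this is in hand, the passage to \emph{special} and then \emph{extraspecial} is a routine unwinding of the definitions. A minor technical point to keep clean is the symplectic-form description of the centralizers used in the converse, which is standard but should be stated or referenced with care.
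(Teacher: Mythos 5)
Your proof is correct, and it follows the same skeleton as the paper's: the bound comes from $C(x)=C(xz)$ for $z\in Z(G)$, equality forces $\mid Z(G)\mid=2$ and $\frac{G}{Z(G)}$ elementary abelian of exponent $2$, whence $G'=Z(G)$ and $G$ is extraspecial. The difference is that where the paper outsources the two substantive steps to citations (its Theorem~3.2 and Proposition~3.10 of the author's companion preprint \cite{baishyaF}), you prove them inline: the forced bijectivity of $xZ(G)\mapsto C(x)$ combined with $C(x)=C(x^{-1})$ gives $x^{2}\in Z(G)$ directly, and the converse is handled by identifying $\frac{C(x)}{Z(G)}$ with $(xZ(G))^{\perp}$ for the non-degenerate symplectic form on $\frac{G}{Z(G)}$ over $\mathbb{F}_{2}$ (for which you should check bilinearity of the commutator map, which holds precisely because $G'\le Z(G)$). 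This buys self-containedness at essentially no cost in length, and your $x^{2}\in Z(G)$ trick is a clean elementary substitute for the cited structure theorem. Two minor points: the claim $G'\neq 1$ rests on the paper's standing convention that $G$ is non-abelian (without it, $C_{2}$ is a degenerate counterexample to the equality statement), so it is worth saying so explicitly; and in the forward direction you should note that $G$ being a $2$-group already follows from $\frac{G}{G'}$ and $G'$ both being elementary abelian $2$-groups, so the remark about $x^{4}=1$ is harmless but redundant.
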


\begin{proof}
Since $C(x)=C(xz)$ for any $x \in G$ and any $z \in Z(G)$, therefore $\mid \Cent(G)\mid \leq \frac{\mid G \mid}{2}$.

Now, suppose $\mid \Cent(G)\mid = \frac{\mid G \mid}{2}$. Then we have  $\mid Z(G) \mid=2$ by noting that  $C(x)=C(xz)$ for any $x \in G$ and any $z \in Z(G)$. In the present scenario, using \cite[Proposition 3.1]{baishyaF}, we have $\frac{G}{Z(G)}$ is an elementary abelian $2$-group. Therefore $G'=Z(G)$ and consequently, $G$ is an extraspecial $2$-group. Conversely, if $G$ is an extraspecial $2$-group, then by \cite[Proposition 3.13]{baishyaF}, $\mid \Cent(G)\mid = \frac{\mid G \mid}{2}$.
\end{proof}

It may be mentioned here that the alternating group $A_4$ of degree $4$ has trivial center and $\mid \Cent(A_4) \mid=\frac{\mid A_4 \mid}{2}$. In the present scenario, the following problem is worth mentioning:

\begin{prob}
Classify the finite groups $G$ with $\mid \Cent(G)\mid = \frac{\mid G \mid}{2}$.
\end{prob}

We now classify finite F-groups with $\mid \Cent(G)\mid \geq \frac{\mid G \mid}{2}$.

\begin{thm}\label{thm1}
$G$ is  a finite F-group with $\mid \Cent(G)\mid \geq \frac{\mid G \mid}{2}$ iff $G \cong A_4 $, an extraspecial $2$-group or a Frobenius group with abelian kernel and complement of order $2$. 
\end{thm}

\begin{proof}
Let $G$ be a finite F-group. Suppose $\mid \Cent(G)\mid = \frac{\mid G \mid}{2}$. Now, if $\mid Z(G)\mid >1$, then by Proposition \ref{t1}, $G$ is an extraspecial $2$-group. On the other hand if $\mid Z(G)\mid =1$, then using classification of groups of rank 1 (see \cite[p.578]{zappa}), $G$ is a CA-group. Now, if $G$ is non-solvable, then by \cite[Lemma 3.9]{abc},  $G \cong PSL(2, 2^m), 2^m>3$ and using \cite[Theorem 1.1]{zarrin094}, $\mid \Cent(G)\mid < \frac{\mid G \mid}{2}$, which is a contradiction. Therefore $G$ is solvable and consequently, by\cite[Proposition 3.1.1, Proposition 1.2.4]{elizabeth}, $G\cong K \rtimes H$ is a Frobenius group with abelian kernel $K$ and cyclic complement $H$. Now, using properties of Frobenius groups we have $\mid \Cent(G)\mid = \mid K \mid+2= \frac{\mid K \mid \mid H \mid}{2}$. It now follows that $\mid H \mid=3$ and $\mid K \mid=4$. Therefore $\mid G \mid=12$ and hence $G \cong A_4$.

Next, suppose $\mid \Cent(G)\mid > \frac{\mid G \mid}{2}$. We have $C(x)=C(xz)$ for any $x \in G$ and any $z \in Z(G)$ and consequently, $\mid Z(G) \mid=1$. Now, using classification of groups of rank 1 (see \cite[p.578]{zappa}), $G$ is a CA-group. Suppose  $G$ is non-solvable. Then by \cite[Lemma 3.9]{abc},  $G \cong PSL(2, 2^m), 2^m>3$ and using \cite[Theorem 1.1]{zarrin094}, $\mid \Cent(G)\mid < \frac{\mid G \mid}{2}$, which is a contradiction. Therefore $G$ is solvable and consequently, by \cite[Proposition 3.1.1, Proposition 1.2.4]{elizabeth}, $G\cong K \rtimes H$ is a Frobenius group with abelian kernel $K$ and cyclic complement $H$. In the present scenario, using the properties of Frobenius groups, we have $\mid \Cent(G)\mid = \mid K \mid 
+2$. Now, suppose $\mid H \mid \geq 3$. 
Then $ \frac{\mid G \mid}{2}=\frac{\mid K \mid \mid H \mid }{2} \geq \mid K \mid + \frac{\mid K \mid}{2} > \mid K \mid +2$, which is a contradiction by noting that we must have $\mid K \mid >4$ in this situation. Therefore we have $H \cong C_2$.

Conversely, if $G \cong A_4$, an extraspecial $2$-group or a Frobenius group with abelian kernel and complement of order $2$, then $G$ is an F-group (see classification of groups of rank 1 \cite[p.578]{zappa}. Clearly,  $\mid \Cent(A_4)\mid = 6$. Moreover,  if $G$ is an extraspecial $2$-group, then by Proposition \ref{t1}, $\mid \Cent(G)\mid = \frac{\mid G \mid}{2}$. On the other hand, if $G$ is a Frobenius group with abelian kernel and complement  of order $2$, then from the properties of Frobenius groups, we have $\mid \Cent(G)\mid = \frac{\mid G \mid}{2}+2$.
\end{proof}

\begin{cor}\label{ccor1}
 $G$ is  a finite CA-group with $\mid \Cent(G)\mid \geq \frac{\mid G \mid}{2}$ iff $G \cong A_4, Q_8, D_8$ or a Frobenius group with abelian kernel and complement of order $2$. 
\end{cor}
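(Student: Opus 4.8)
The plan is to derive this corollary directly from Theorem \ref{thm1}. The crucial first observation is that every finite CA-group is automatically an F-group: if $x, y \in G \setminus Z(G)$ satisfy $C(x) \subseteq C(y)$, then $x \in C(y)$, and since $C(y)$ is abelian every element of $C(y)$ commutes with $x$, whence $C(y) \subseteq C(x)$ and so $C(x) = C(y)$. Consequently a finite CA-group $G$ with $|\Cent(G)| \geq |G|/2$ is in particular a finite F-group satisfying the same inequality, so Theorem \ref{thm1} forces $G \cong A_4$, $D_{2n}$ (with $n$ odd), or an extraspecial $2$-group. It then remains only to decide which of these are genuinely CA-groups.

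For $A_4$ and $D_{2n}$ ($n$ odd) this is routine: the centralizer of a non-central element is, respectively, $C_3$ or $C_2 \times C_2$ in $A_4$, and $C_n$ (for a rotation) or $C_2$ (for a reflection) in $D_{2n}$, all abelian. The real work is to separate the extraspecial $2$-groups. Here I would invoke the central product structure: an extraspecial $2$-group of order $2^{2a+1}$ is a central product of $a$ factors, each isomorphic to $D_8$ or $Q_8$. When $a = 1$ these are exactly $D_8$ and $Q_8$, both CA-groups (their non-central centralizers are $C_4$ or $C_2 \times C_2$). When $a \geq 2$, choose a non-central element $x$ lying in the first central factor; since distinct factors of a central product commute elementwise, the entire second (non-abelian) factor lies inside $C(x)$, so $C(x)$ is non-abelian. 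Hence no extraspecial $2$-group of order $\geq 32$ is a CA-group, and among extraspecial $2$-groups only $D_8$ and $Q_8$ survive, yielding the asserted list.

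For the converse I would verify that each of $A_4$, $Q_8$, $D_8$, and $D_{2n}$ ($n$ odd) is a CA-group (already checked above) and meets the centralizer bound. By Proposition \ref{t1} the extraspecial $2$-groups $D_8$ and $Q_8$ satisfy $|\Cent(G)| = |G|/2$; one has $|\Cent(A_4)| = 6 = |A_4|/2$; and $|\Cent(D_{2n})| = n+2 > n = |D_{2n}|/2$ for odd $n$ by \cite{en09}. Thus all four families satisfy $|\Cent(G)| \geq |G|/2$, completing the equivalence.

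I expect the only genuine obstacle to be the extraspecial case, namely ruling out orders $\geq 32$; the central product argument above is the cleanest route, though one could equivalently argue via the symplectic form on $\frac{G}{Z(G)}$, where the centralizer of a non-central element corresponds to a subspace of codimension $1$ that fails to be totally isotropic once $a \geq 2$, again forcing $C(x)$ to be non-abelian.
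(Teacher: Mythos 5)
Your proposal is correct and follows essentially the same route as the paper: apply Theorem \ref{thm1} (noting that CA-groups are F-groups) and then observe that among extraspecial $2$-groups only $D_8$ and $Q_8$ are CA-groups. The only difference is that where the paper simply cites Zappa for the fact that an extraspecial $2$-group of order greater than $8$ is not a CA-group, you supply a short self-contained proof via the central product decomposition, which is a valid and welcome elaboration but not a different approach.
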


\begin{proof}
The result follows from Theorem \ref{thm1} by noting that if $G$ is an extraspecial $2$-group of order $> 8$, then $G$ is not a CA-group (see classification of groups of rank 1 \cite[p.578]{zappa}). 
\end{proof}

In view of Theorem \ref{bc1} and Theorem \ref{thm1}, we now have the following result:

\begin{prop}\label{xx}
If $G$ is  a finite $n$-centralizer non-abelian F-group, then  $\mid \frac{G}{Z(G)}\mid \leq (n-2)^2 \leq \frac{{\mid G \mid}^2}{4}$.
\end{prop}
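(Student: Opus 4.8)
The plan is to prove the two inequalities of the chain $\mid \frac{G}{Z(G)}\mid \leq (n-2)^2 \leq \frac{{\mid G \mid}^2}{4}$ separately, since they rest on different cited results.

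The left-hand inequality is essentially already in hand. Since $G$ is a finite F-group, Proposition \ref{z-class5} guarantees that $\lbrace \frac{Z(x_i)}{Z(G)} \mid 1 \leq i \leq n-1 \rbrace$ is a partition of $\frac{G}{Z(G)}$, and Theorem \ref{bc1}(a) then yields $\mid \frac{G}{Z(G)}\mid \leq (n-2)^2$ directly. So this half requires no fresh argument beyond quoting the partition characterization.

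For the right-hand inequality I would first reduce it to a linear statement. Because $n \geq 4$ for any non-abelian group (so $n-2 > 0$) and $\mid G \mid > 0$, the inequality $(n-2)^2 \leq \frac{{\mid G \mid}^2}{4}$ is equivalent to $n-2 \leq \frac{\mid G \mid}{2}$, that is, to $\mid \Cent(G)\mid \leq \frac{\mid G \mid}{2} + 2$. I would then dichotomize on the size of $\mid \Cent(G)\mid$. If $\mid \Cent(G)\mid < \frac{\mid G \mid}{2}$, the bound is immediate, since $n-2 < n < \frac{\mid G \mid}{2}$. If instead $\mid \Cent(G)\mid \geq \frac{\mid G \mid}{2}$, then Theorem \ref{thm1} forces $G$ to be one of $A_4$, $D_{2m}$ (with $m$ odd), or an extraspecial $2$-group, and in each of these three explicit families the bound is verified by direct computation: for $A_4$ one has $n = 6$ and $\mid G \mid = 12$, so $n-2 = 4 < 6 = \frac{\mid G \mid}{2}$; for an extraspecial $2$-group Proposition \ref{t1} gives $n = \frac{\mid G \mid}{2}$, whence $n-2 < \frac{\mid G \mid}{2}$; and for $D_{2m}$ with $m$ odd one has $n = m+2$ and $\mid G \mid = 2m$, so $n-2 = m = \frac{\mid G \mid}{2}$, which is the tight case.

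The only subtlety I anticipate is the dihedral family: it is precisely there that the right-hand inequality is attained with equality, so the argument must retain the non-strict $\leq$ rather than overreaching to claim strictness. Apart from correctly invoking Theorem \ref{thm1} to confine the large-$\mid \Cent(G)\mid$ regime to these three groups, the proof is a routine assembly of the two cited theorems, and I foresee no genuine obstacle.
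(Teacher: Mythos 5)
Your proposal is correct and follows essentially the same route as the paper, which derives the result precisely ``in view of Theorem \ref{bc1} and Theorem \ref{thm1}'': the first inequality from the partition bound of Theorem \ref{bc1}(a), and the second from the classification in Theorem \ref{thm1} reducing $n-2\leq \frac{\mid G\mid}{2}$ to the three explicit families. Your verification of the tight dihedral case matches the paper's own remark that equality in $\mid \Cent(G)\mid \leq \frac{\mid G\mid}{2}+2$ occurs exactly for $D_{2m}$ with $m$ odd.
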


There exists finite groups, for example, say $G=S_3 \times S_3$ such that $\mid \Cent(G )\mid > \frac{\mid G \mid}{2}+2$. However, it follows from Theorem \ref{thm1} that for any finite F-group $G$, we have  $\mid \Cent(G)\mid \leq \frac{\mid G \mid}{2}+2$, with equality iff  $G$ is a Frobenius group with abelian kernel and  complement  of order $2$. It may be mentioned here that $\mid \Cent (S_4) \mid=\frac{\mid S_4 \mid}{2}+2$ but $S_4$ is not an F-group. In this connection it is natural to study the following problem:

\begin{prob}
Classify the finite groups $G$ with $\mid \Cent(G)\mid = \frac{\mid G \mid}{2}+2$.
\end{prob}

Let $H$ be any subgroup of a group $G$. It is easy to see that $\mid \Cent(H) \mid \leq \mid \Cent(G)\mid$. The authors in \cite{con} studied some conditions for the equality and among other results  they  showed that (\cite[Proposition 3.6]{con}) if $G$ is a finite group such that $\frac{G}{Z(G)}$ is isomorphic to a simple group, then $G$ and $G'$ are isoclinic groups  and consequently, $\mid \Cent(G) \mid = \mid \Cent(G')\mid$ (for basic notions of isoclinism see \cite{pL95, non}). Here we give a generalization of this result (a group $G$ is called perfect if $G=G'$):

\begin{prop}\label{CG118}
Let $G$ be an $n$-centralizer non-abelian group. Then $\frac{G}{Z(G)}$ is perfect if and only if $G$ is isoclinic with $G'$. In particular, $\mid \Cent(G) \mid = \mid \Cent(G')\mid$.
\end{prop}

\begin{proof}
In view of  \cite[Theorem 1.2 (Baer)]{tom}, $\frac{G}{Z(G)}$ is finite by noting that $G=Z(x_1)\cup \dots \cup Z(x_{n-1})$. Suppose $\frac{G}{Z(G)}$ is perfect. Then $\frac{G}{Z(G)}=(\frac{G}{Z(G)})'=\frac{G'Z(G)}{Z(G)}$ and consequently, $G'Z(G)=G$. Therefore in view of \cite[Lemma 2.7]{pL95}, $G$ is isoclinic to $G'$. Conversely, if if $G$ is isoclinic with $G'$, then using \cite[Lemma 2.7]{pL95} we have  $G'Z(G)=G$, which implies $\frac{G}{Z(G)}=(\frac{G}{Z(G)})'$.  Moreover, by \cite[Lemma 3.2]{non} we have $\mid \Cent(G) \mid = \mid \Cent(G')\mid$.
\end{proof}

Finally, we conclude the paper with a family of groups which are F-groups but not CA-groups. In this connection, we want to point out that the author in \cite[Proposition 1.3]{brough} proved that an extraspecial $p$-group ($p$ a prime) of order  $> p^3$ is an F-group which is not a CA-group. However, this is already a known result which was initially obtained by Zappa (see \cite[pp. 578]{zappa}). The following proposition extends this result and  gives a large family of F-groups which are not CA-groups. Note that every extraspecial $p$-group is of conjugate type $(p, 1)$.

\begin{prop}\label{Za1}
Let $G$ be a finite group of conjugate type $(p^k, 1)$ for some prime $p$. If $\mid \frac{G}{Z(G)} \mid > p^{2k}$, then $C(x)$ is non-abelian for any $x \in G \setminus Z(G)$.
\end{prop}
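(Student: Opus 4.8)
The plan is to argue by contradiction: assume that $C(x)$ is abelian for some $x \in G \setminus Z(G)$, and deduce that $|G/Z(G)| \le p^{2k}$, contrary to hypothesis. Write $A = C(x)$. Since $G$ has conjugate type $(p^k,1)$, every proper centralizer has index $p^k$, so $|A| = |G|/p^k$ and hence $|A/Z(G)| = |G/Z(G)|/p^k$. The first observation is that an abelian centralizer is the common centralizer of all its non-central elements: if $C(y)$ is abelian and $a \in C(y) \setminus Z(G)$, then $C(y) \subseteq C(a)$ because $C(y)$ is abelian, and since $|C(a)| = |G|/p^k = |C(y)|$ we get $C(a) = C(y)$. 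A second observation, recorded for the main estimate, is that two distinct abelian centralizers of non-central elements intersect in $Z(G)$: if $A_1$ and $A_2$ are abelian centralizers and $w \in (A_1 \cap A_2)\setminus Z(G)$, then the first observation gives $C(w) = A_1$ and $C(w) = A_2$, forcing $A_1 = A_2$.

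The heart of the proof is the following counting step. Suppose we can exhibit a second abelian centralizer $B \ne A$ of a non-central element. Both $A$ and $B$ have order $|G|/p^k$, and $A \cap B = Z(G)$ by the second observation, so the product formula gives
\[
|G| \ge |AB| = \frac{|A|\,|B|}{|A \cap B|} = \frac{(|G|/p^k)^2}{|Z(G)|} = \frac{|G|^2}{p^{2k}\,|Z(G)|}.
\]
Rearranging yields $|G/Z(G)| \le p^{2k}$, the desired contradiction. Thus the problem reduces entirely to producing a single abelian centralizer distinct from $A$.

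To obtain $B$ I would pass to a conjugate: for any $g \in G$ we have $C(g^{-1}xg) = g^{-1}C(x)g = g^{-1}Ag$, which is again abelian, and differs from $A$ as soon as $g \notin N_G(A)$. Hence if $A$ is not normal in $G$ the argument is complete. The main obstacle is therefore the case in which $A = C(x)$ is normal in $G$ (which already occurs, for instance, whenever $[G:A]=p$), since then no conjugate supplies a new centralizer and one must argue differently.

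To treat the normal case I would exploit that $G$ is a $p$-group together with the conjugate-type hypothesis. Since $A$ is abelian and $C(a)=A$ for every $a \in A \setminus Z(G)$, the subgroup $A$ is self-centralizing, so $P = G/A$ (of order $p^k$) acts faithfully on $A$; moreover, as the stabilizer in $G$ of any $a \in A\setminus Z(G)$ is $C(a)=A$, the group $P$ acts \emph{freely} on $A \setminus Z(G)$, with fixed-point set exactly $Z(G)$. The induced action on the nontrivial $p$-group $A/Z(G)$ must have a nonzero fixed point (a $p$-group acting on a nontrivial $p$-group always does), and analyzing the interaction of this fixed point with the freeness downstairs is the mechanism by which I expect to force $|A/Z(G)| \le p^k$, equivalently $|G/Z(G)| \le p^{2k}$. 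Pinning down this implication precisely---ruling out a normal, abelian, self-centralizing centralizer of index $p^k$ once $|G/Z(G)| > p^{2k}$---is the step I expect to require the most care, and is the point at which the full strength of the conjugate-type condition on the elements outside $A$ should be invoked.
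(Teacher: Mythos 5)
Your reduction to exhibiting a second abelian proper centralizer works cleanly as far as it goes: the observations that an abelian proper centralizer equals $C(a)$ for each of its non-central elements $a$, and that two distinct such centralizers meet in $Z(G)$, are correct, and the product formula then gives $\mid\frac{G}{Z(G)}\mid \leq p^{2k}$. The problem is that the case you defer --- $A=C(x)$ abelian and normal --- is not a degenerate corner but the \emph{main} case, and your sketch for it does not close. In the motivating examples (extraspecial and semi-extraspecial groups, where every proper centralizer contains $G'$ and is therefore normal) \emph{all} proper centralizers are normal, so no conjugate ever supplies a second centralizer. Worse, the mechanism you propose gives nothing there: when $[G,A]\leq Z(G)$ the induced action of $G/A$ on $A/Z(G)$ is \emph{trivial}, so the ``nonzero fixed point'' carries no information, and the freeness of the action on $A\setminus Z(G)$ only yields the congruence $p^k \mid \mid A\mid-\mid Z(G)\mid$, which is no constraint on $\mid A/Z(G)\mid$ once $Z(G)$ is large. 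The natural elementary estimate in the normal case --- $Z(G)=\bigcap_i C_A(y_i)$ over $d\leq k$ generators $y_i$ of $G/A$, with each $C_A(y_i)$ of index at most $p^k$ in $A$ --- only gives $\mid A:Z(G)\mid\leq p^{k^2}$, which suffices for $k=1$ but not for $k\geq 2$. So for $k\geq 2$ there is a genuine gap.

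The paper closes this in one line by quoting Mann's Proposition 1 on extreme elements of finite $p$-groups: in a $p$-group of conjugate type $(p^k,1)$ one has $\mid\frac{Z(x)}{Z(G)}\mid\leq p^k$ for every $x\in G\setminus Z(G)$, where $Z(x)=Z(C(x))$. If $C(x)$ were abelian then $Z(x)=C(x)$, whence $\mid\frac{G}{Z(G)}\mid=\mid G:C(x)\mid\cdot\mid C(x):Z(G)\mid\leq p^{2k}$, contradicting the hypothesis. That inequality is exactly the refined $p$-group fact your normal case is missing; it is not recoverable from the fixed-point/free-action considerations you outline, so you would either need to cite Mann's result or reprove it, at which point your covering argument becomes superfluous.
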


\begin{proof}
Following Ito \cite{ito}, without any loss we may assume that $G$ is a $p$ group. Moreover, in view of \cite[Proposition 1]{mann}, we have $\mid \frac{Z(x)}{Z(G)} \mid \leq p^k$ for any
 $x \in G \setminus Z(G)$. Hence the result follows.

\end{proof}



\end{document}